\newtheorem{theorem}{Theorem}
\theoremstyle{plain}
\newtheorem{corollary}{Corollary}
\newtheorem{lemma}{Lemma}
\newtheorem{remark}{Remark}
\numberwithin{equation}{section}
\email{amsengouga@gmail.com,abdelmouhcene.sengouga@univ-msila.dz}
\subjclass[2010]{35L05, 35C10, 93D15.}
\keywords{Wave equation, time-dependent domains, generalized Fourier series, boundary stabilization.
}
\begin{document}
\title[Boundary stabilization of a vibrating string]{Boundary stabilization of a vibrating string with variable length}

\author[S. Ghenimi]{Seyf Eddine Ghenimi}
\author[A. Sengouga]{Abdelmouhcene Sengouga}
\address[Seyf Eddine Ghenimi, Abdelmouhcene Sengouga]{ Laboratory of Functional Analysis and Geometry of Spaces\\
Department of mathematics\\
Faculty of Mathematics and Computer Sciences\\
University of M'sila\\
28000 M'sila, Algeria.}
\date{\today}
\maketitle

\begin{abstract}
We study small vibrations of a string with time-dependent length $\ell(t)$ and boundary damping. The vibrations are described by a 1-d wave equation in an interval with one moving endpoint at a speed $\ell'(t)$ slower than the speed of propagation of the wave c=1. With no damping, the energy of the solution decays if the interval is expanding and increases if the interval is shrinking. The energy decays faster when the interval is expanding and a constant damping is applied at the moving end. However, to ensure the energy decay in a shrinking interval, the damping factor $\eta$ must be close enough to the optimal value $\eta=1$, corresponding to the transparent condition. In all cases, we establish lower and upper estimates for the energy with explicit constants. 
\end{abstract}


\section{Introduction}

We consider small transversal vibrations of a uniform string, with a time
dependent length. The mechanical setting is sketched in Figure \ref{fig1}
where the left end of the string is fixed while the right moving end is also
allowed to move transversely and attached to a damping device (a dash-pot
with a damping factor $\eta \geq 0$).

\begin{figure}[tbph]
\centering\includegraphics[width=0.67\textwidth]{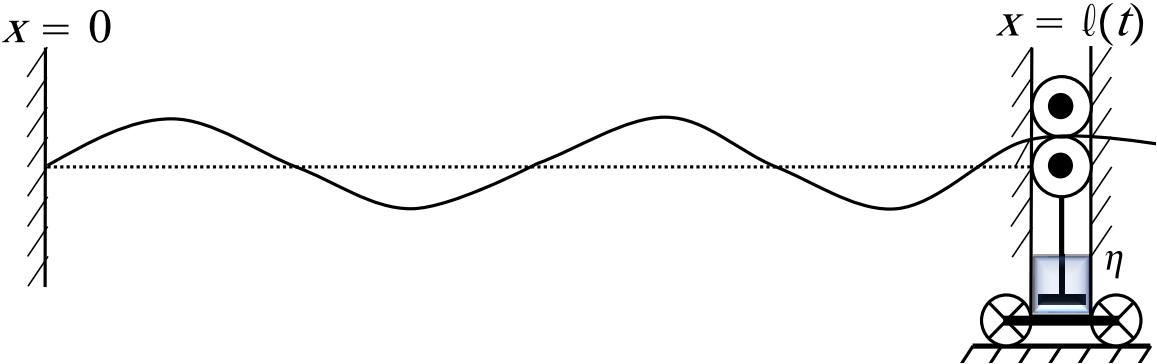}
\caption{A string with one moving end subject to a dash-pot damping.}
\label{fig1}
\end{figure}

Denoting the displacement function by $u$, depending on the position $x$
along the string and the time $t$, the model can be stated as follows 
\begin{equation}
\left\{ 
\begin{array}{ll}
u_{tt}-u_{xx}=0,\ \smallskip & \text{for }0<x<\ell (t)\text{ and }t>0, \\ 
\left( 1+\eta \ell ^{\prime }\left( t\right) \right) u_{x}\left( \ell \left(
t\right) ,t\right) +\left( \eta +\ell ^{\prime }\left( t\right) \right)
u_{t}\left( \ell \left( t\right) ,t\right) =0, \smallskip & 
\text{for }t>0, \\ 
u\left( 0,t\right) =0\text{,}\smallskip & \text{for }t>0, \\ 
u(x,0)=u^{0}\left( x\right) ,\text{ }u_{t}\left( x,0\right) =u^{1}\left(
x\right) ,\text{ \ } & \text{for }0<x<L.%
\end{array}%
\right.  \tag{WP}  \label{waveq}
\end{equation}%
The
subscripts $t$ and $x$ in (\ref{waveq}) stand for the derivatives in time and space variables
respectively. The functions $u^{0}$ and $u^{1}$ represents the initial shape
and the initial transverse speed of the string, respectively. The initial
length of the string is denoted by $L=\ell (0)$.

We assume that $\ell \in C\left( \left[ 0,+\infty \right[ \right) $ and that%
\begin{equation}
\left\vert \ell ^{\prime }\left( t\right) \right\vert <1,\ \ \ \text{for }%
t\geq 0,  \label{tlike}
\end{equation}%
which means that the speed of variation of the length of the string $\ell
^{\prime }\left( t\right) $ is strictly less then the speed of propagation
of the wave (here simplified to $c=1$).

In this paper, we are mainly interested in the asymptotic behaviour in time
of the energy of the solution, defined as%
\begin{equation}
E_{\ell }\left( t\right) :=\frac{1}{2}\int_{0}^{\mathbf{\ell }\left(
t\right) }u_{t}^{2}\left( x,t\right) +u_{x}^{2}\left( x,t\right) dx,\ \ \ 
\text{for }t\geq 0.  \label{E}
\end{equation}

For the time-independent interval, i.e. when $\ell \left( t\right) =L$ for $%
t\geq 0$, it is well known that:

\begin{itemize}
\item If $\eta =0$, then it is the energy is constant, i.e. $E_{L}\left(
t\right) =E_{L}\left( 0\right) ,$ for $t\geq 0$.

\item If $\eta \geq 0$ with $\eta \neq 1\ $, then the energy decays
exponentially%
\begin{equation*}
E_{L}\left( t\right) \leq CE_{L}\left( 0\right) e^{-\frac{1}{L}\ln
\left\vert \gamma _{\eta }\right\vert t},\text{ for some constant }C>0,
\end{equation*}%
where $\gamma _{\eta }:=\frac{1+\eta }{1-\eta }$, see \cite%
{Vese1988,CoZu1995,Cher1994,QuRu1977}. In \cite{GhSe2022a}, the present
authors showed that%
\begin{equation}
\frac{1}{\gamma _{\eta }^{2}}E_{L}\left( 0\right) e^{-\frac{1}{L}\ln
\left\vert \gamma _{\eta }\right\vert t}\leq E_{L}\left( t\right) \leq
\gamma _{\eta }^{2}E_{L}\left( 0\right) e^{-\frac{1}{L}\ln \left\vert \gamma
_{\eta }\right\vert t},\ \ \ \text{for }t\geq 0.
\end{equation}%
\end{itemize}

The question of $E_{\ell }\left( t\right)$ behaviour in time is more
delicate if the interval depends on time. For instance, for the Dirichlet
boundary conditions (which corresponds to $\eta =+\infty $ in (\ref{waveq}%
)), the energy decays if the interval is expanding and increases if the
interval is shrinking, see \cite{BaCh1981}. See also \cite%
{Seng2018,Seng2018a,SuLL2015} when the variation of the length is uniform in
time.

Regarding the case with a velocity feedback at the moving endpoint $x=\ell
\left( t\right) $:

\begin{itemize}
\item Gugat \cite{Guga2008} considered the case $u_{x}\left( \ell \left(
t\right) ,t\right) +cu_{t}\left( \ell \left( t\right) ,t\right) =0$ where $c$
is as constant. See also \cite{LuFe2020} for the particular $\ell (t)=1+vt$
where $v$ is a constant, $0<v<1$.

\item Ammari et al. \cite{AmBE2018} considered the case $u_{x}\left( \ell
\left( t\right) ,t\right) +f\left( t\right) u_{t}\left( \ell \left( t\right)
,t\right) =0,$ where $f\in L^{\infty }\left( 0,+\infty \right) \ $and $\ell
\left( t\right) $ is periodic. See also \cite{HaHo2019}\ where $\ell \left(
t\right) $ is not necessarily periodic. Mokhtari \cite{Mokh2022} considered
the case with two moving endpoints.
\end{itemize}

\emph{For the special case }$\eta =1$, the boundary condition at $x=\ell
\left( t\right) $ reads 
\begin{equation*}
u_{x}\left( \ell \left( t\right) ,t\right) +u_{t}\left( \ell \left( t\right)
,t\right) =0.
\end{equation*}%
This is a transparent condition, i.e. there is no reflections of waves from
the moving endpoint and consequently all the initial disturbances leave the
interval $\left( 0,\ell \left( t\right) \right) $ at most after a time 
\begin{equation*}
T_{\ell }:=\beta ^{-1}\left( L\right) ,
\end{equation*}%
where $\beta \left( t\right) :=t-\ell \left( t\right) ,$ see Figure \ref%
{fig2}. Hence, wether the interval is expanding or shrinking, the linear
velocity feedback $-u_{t}\left( \ell ^{\prime }\left( t\right) ,t\right) $
steers the solution to the zero state in the finite time $T_{\ell }$. See
for instance \cite{Guga2008,HaHo2019}, and for the particular case $\ell
\left( t\right) =L$ see \cite{Vese1988,CoZu1995}. In the remaining of this
paper, we will assume that $\eta \geq 0$ and $\eta \neq 1.$ 
\begin{figure}[tbph]
\centering\includegraphics[width=0.3\textwidth]{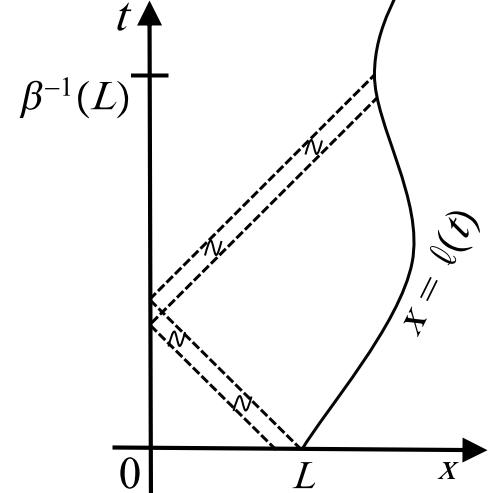}
\caption{An initial disturbance with a small support leaves the interval at
most after time $T_{\ell }$.}
\label{fig2}
\end{figure}

The approach used in the paper at hands is based on generalized Fourier
series. This is possible since the closed form for the solution of (\ref%
{waveq}) is given by the series%
\begin{equation}
u(x,t)=\sum_{n\in \mathbb{Z}}c_{n}\left( e^{\omega _{n}\varphi
(t+x)}-e^{\omega _{n}\varphi (t-x)}\right) ,\text{ \ \ \ for }0<x<\ell (t)%
\text{ and }t\geq 0,  \label{exact0}
\end{equation}%
where the coefficients $c_{n}$ can be explicitly computed in function of the
initial data $u^{0},u^{1}.\ $We denoted by $\omega _{n}$ a sequence of
complex number given by%
\begin{equation}
\omega _{n}:=-\frac{1}{2}\ln \left\vert \gamma _{\eta }\right\vert +\left\{ 
\begin{array}{ll}
\displaystyle\frac{2n+1}{2}i\pi , & \text{if }0\leq \eta <1,\smallskip \\ 
\displaystyle ni\pi , & \text{if }\eta >1,%
\end{array}%
\right.  \label{wn}
\end{equation}%
Observe that since $\left\vert \gamma _{\eta }\right\vert \geq 1$ for every $%
\eta \geq 0,$ the real part of $\omega _{n}$ is nonpositive.

By $\varphi ,$ we denoted a real function satisfying the functional equation 
\begin{equation}
\varphi \left( t+\ell \left( t\right) \right) -\varphi \left( t-\ell \left(
t\right) \right) =2.  \label{Moor}
\end{equation}
This equation often called Moor's equation following his paper \cite{Moor1970}. Some pairs of solutions $(\ell, \phi)$ can be found in \cite{VePo1975}.

We will
assume that $\varphi $ is differentiable and increasing. More precisely,%
\begin{equation}
\varphi \in C^{1}\left( \left[ -L,+\infty \right[ \right) \text{ \ and \ }%
\varphi ^{\prime }>0,\text{ \ for }t\geq -L.  \label{mono}
\end{equation}%
The assumption $\varphi ^{\prime }>0$ is needed in the sequel since $\varphi
^{\prime }$ will serve as a weight for an $L^2$ space. Besides, a
deacreasing $\phi$ can not satisfy (\ref{Moor}) since $\ell \left(
t\right)>0 $. See \cite{HaHo2019}
for further discussions on the regularity of the solutions of (\ref{Moor}).

In this work, we demonstrate how the series formulas (\ref{exact0}) can be
used to achieve the following results:

\begin{itemize}
\item \emph{For the undamped case, i.e. $\eta =0$}, the energy of the
solution satisfies%
\begin{equation}
\frac{m\left( t\right) }{M\left( t_{0}\right) }E_{\ell }\left( t_{0}\right)
\leq E_{\ell }\left( t\right) \leq \frac{M\left( t\right) }{m\left(
t_{0}\right) }E_{\ell }\left( t_{0}\right) ,\text{ \ \ \ for }0\leq t_{0}<t,
\label{estim1}
\end{equation}%
where%
\begin{equation}
m\left( t\right) :=\min_{x\in \left[ 0,\ell \left( t\right) \right] }\left\{
\varphi ^{\prime }(t-x),\varphi ^{\prime }(t+x)\right\} \ \text{and }M\left(
t\right) :=\max_{x\in \left[ 0,\ell \left( t\right) \right] }\left\{ \varphi
^{\prime }(t-x),\varphi ^{\prime }(t+x)\right\} .  \label{m.M}
\end{equation}%
See Theorem \ref{th0damp} and its corollaries for sharper estimates.

\item \emph{For the damped case $\eta >0,$ with $\eta \neq 1$}, the energy $%
E_{\ell }\left( t\right) $ satisfies%
\begin{multline}
\left( \frac{e^{\ln \left\vert \gamma _{\eta }\right\vert \varphi
(t_{0}-\ell \left( t_{0}\right) )}}{M\left( t_{0}\right) }\right) m\left(
t\right) e^{-\ln \left\vert \gamma _{\eta }\right\vert \varphi (t+\ell
\left( t\right) )}E_{\ell }\left( t_{0}\right) \leq E_{\ell }\left( t\right)
\label{exp-decay} \\
\leq \left( \frac{e^{\ln \left\vert \gamma _{\eta }\right\vert \varphi
(t_{0}+\ell \left( t_{0}\right) )}}{m\left( t_{0}\right) }\right) M\left(
t\right) e^{-\ln \left\vert \gamma _{\eta }\right\vert \varphi (t-\ell
\left( t\right) )}E_{\ell }\left( t_{0}\right) ,\text{ \ for }0\leq t_{0}<t.
\end{multline}%
See Theorem \ref{th-stab1} and it corollaries for more estimates. The
estimate given in (\ref{exp-decay}), with explicit constants, is new to the
best to our knowledge.
\end{itemize}

After the present introduction, we derive the exact solution and the\
expression for the coefficients of the series formula (\ref{exact0}). In
Section 3, we establish upper and lower estimates for the energy $E_{\ell
}(t)$ of the undamped equation. In Section 4, we deal with the damped case.
Some examples will be included as a last section.
\section{Exact solution}

Let us introduce the following family of Hilbert spaces%
\begin{equation*}
V_{0}\left( 0,\ell (t)\right) :=\left\{ w\in H^{1}\left( 0,\ell (t)\right) 
\text{, }w\left( 0\right) =0\right\} ,\ \ \ \text{\ for }t\geq 0
\end{equation*}%
and assume that the initial data satisfies%
\begin{equation}
u^{0}\in V_{0}\left( 0,L\right) ,\text{ \ }u^{1}\in L^{2}\left( 0,L\right) .
\label{ic}
\end{equation}

Let $T>0.$ Then, we have the following existence result for Problem (\ref%
{waveq}).

\begin{theorem}
\label{thexist1}Under the assumptions \emph{(\ref{tlike}), (\ref{mono}) }and 
\emph{(\ref{ic}), }Problem \emph{(\ref{waveq})} has a unique solution
satisfying%
\begin{equation}
u\in C\left( [0,T];V_{0}\left( 0,\ell (t)\right) \right) \cap C^{1}\left(
[0,T];L^{2}\left( 0,\ell (t)\right) \right) ,  \label{regul}
\end{equation}%
\emph{\ } given by the series \emph{(\ref{exact0})} where the coefficients $%
c_{n}\in 
\mathbb{C}
$ are computed as follows%
\begin{equation}
c_{n}=\frac{1}{4\omega _{n}}\int_{-L}^{L}\left( \tilde{u}_{x}^{0}+\tilde{u}%
^{1}\right) e^{-\omega _{n}\varphi (x)}dx\text{, \ \ \ for\ }n\in 
\mathbb{Z}
,\   \label{cn}
\end{equation}%
where\emph{\ }$\tilde{u}_{x}^{0}$ is an even $($resp. $\tilde{u}^{1}$ is an
odd$)$ extension of the initial data $u^{0}$ $($resp. $u^{1})$ defined on
the interval $\left( -L,L\right) $. Moreover,%
\begin{equation}
\sum_{n\in \mathbb{%
\mathbb{Z}
}}\left\vert \omega _{n}c_{n}\right\vert ^{2}=\frac{1}{8}\int_{-L}^{L}\left( 
\tilde{u}_{x}^{0}+\tilde{u}^{1}\right) ^{2}e^{\ln \left\vert \gamma _{\eta
}\right\vert \varphi (x)}\frac{dx}{\varphi ^{\prime }(x)}<+\infty .
\label{ncn}
\end{equation}
\end{theorem}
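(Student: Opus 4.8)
The plan is to construct the solution explicitly by the d'Alembert method, to reduce the moving boundary condition to a scalar functional equation solved by generalized Fourier series, and then to read off the coefficient formula (\ref{cn}) and the identity (\ref{ncn}) from orthogonality and Parseval's theorem, postponing the questions of regularity and uniqueness to the end.

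Since $u_{tt}-u_{xx}=0$ holds in a one-dimensional (moving) interval, any solution in the class (\ref{regul}) has the form $u(x,t)=f(t+x)-f(t-x)$ for a single profile $f$, the two d'Alembert profiles being forced to coincide up to sign by the Dirichlet condition $u(0,t)=0$. Inserting $u_x=f'(t+x)+f'(t-x)$ and $u_t=f'(t+x)-f'(t-x)$ into the condition at $x=\ell(t)$ and abbreviating $a=t+\ell(t)$, $b=t-\ell(t)$, a short computation gives $(1+\eta)(1+\ell'(t))\,f'(a)+(1-\eta)(1-\ell'(t))\,f'(b)=0$. Differentiating Moore's equation (\ref{Moor}) yields $\varphi'(a)(1+\ell'(t))=\varphi'(b)(1-\ell'(t))$, so the substitution $f=F\circ\varphi$, legitimate because (\ref{mono}) makes $\varphi$ an increasing $C^1$ change of variable, together with $\varphi(a)-\varphi(b)=2$ collapses the boundary condition to the advance equation $F'(s+2)=-\gamma_\eta^{-1}F'(s)$. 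Seeking exponential solutions $F'(s)=e^{\omega s}$ forces $e^{2\omega}=-\gamma_\eta^{-1}$; distinguishing the sign of $\gamma_\eta$ (positive for $0\le\eta<1$, negative for $\eta>1$) reproduces exactly the sequence $\omega_n$ of (\ref{wn}), whose common real part is $-\tfrac{1}{2}\ln|\gamma_\eta|$. Superposing gives $F'(s)=\sum_n d_ne^{\omega_n s}$, hence $F(s)=\sum_n(d_n/\omega_n)e^{\omega_n s}$ up to a constant, and $u(x,t)=f(t+x)-f(t-x)=\sum_n c_n\bigl(e^{\omega_n\varphi(t+x)}-e^{\omega_n\varphi(t-x)}\bigr)$ with $c_n:=d_n/\omega_n$, which is (\ref{exact0}); each summand depends on $t+x$ or on $t-x$ alone, so it solves $u_{tt}-u_{xx}=0$ in the generalized sense compatible with (\ref{regul}) (recall $\varphi$ is only $C^1$), the Dirichlet condition holds termwise, and $\omega_n$ was chosen precisely so that the moving boundary condition holds.

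To identify the $c_n$, differentiate (\ref{exact0}) and set $t=0$: this gives $u_x^0(x)+u^1(x)=2\varphi'(x)\sum_n c_n\omega_n e^{\omega_n\varphi(x)}$ on $(0,L)$, and passing to the even extension $\tilde u_x^0$ of $u_x^0$ and the odd extension $\tilde u^1$ of $u^1$ makes this hold on all of $(-L,L)$ (the companion identity for $u_x^0-u^1$ coming out automatically, so that the series does match the initial data in $V_0(0,L)\times L^2(0,L)$). The change of variable $s=\varphi(x)$ sends $(-L,L)$ onto an interval $I$ of length $2$ by (\ref{Moor}) at $t=0$, and once the common real exponential factor $e^{-\frac{1}{2}\ln|\gamma_\eta|\,s}$ is divided out, the functions $e^{\omega_n s}$ reduce to $e^{i\pi(n+1/2)s}$ (case $0\le\eta<1$) or $e^{i\pi n s}$ (case $\eta>1$), an orthogonal system of squared norm $2$ on any interval of length $2$. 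Projecting onto the $n$-th exponential and changing variables back gives (\ref{cn}); Parseval's identity then gives $\sum_n|\omega_n c_n|^2$ as $\tfrac{1}{2}$ times the $L^2(I)$-norm with weight $e^{\ln|\gamma_\eta|\,s}$, which after the same change of variables is exactly the right-hand side of (\ref{ncn}). Its finiteness is clear: $\tilde u_x^0+\tilde u^1\in L^2(-L,L)$ by (\ref{ic}), while $x\mapsto e^{\ln|\gamma_\eta|\varphi(x)}/\varphi'(x)$ is continuous, hence bounded, on the compact $[-L,L]$ because $\varphi'>0$.

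It remains to justify the regularity (\ref{regul}) and uniqueness. For regularity one checks convergence in $C([0,T];V_0(0,\ell(t)))\cap C^1([0,T];L^2(0,\ell(t)))$: after $s=\varphi(t\pm x)$, the $H^1$- and $L^2$-norms of the partial sums at time $t$ are comparable to the $L^2$-norm, over a window of length at most $2$ (again by (\ref{Moor})), of the Fourier series $\sum_n c_n\omega_n e^{i\pi(n+1/2)s}$ (resp. $e^{i\pi n s}$), which converges by (\ref{ncn}); continuity in $t$ then follows from continuity of translation in $L^2$ together with these uniform bounds. For uniqueness, the difference $v$ of two solutions with the same data has profile $f$ with $f'\equiv0$ on $(-L,L)$, i.e. $F'\equiv0$ on the length-$2$ image interval; propagating $F'(s+2)=-\gamma_\eta^{-1}F'(s)$ forward forces $F'\equiv0$, hence $v\equiv0$. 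I expect this last step to be the main obstacle: showing that the formal series genuinely lies in (\ref{regul}) and is continuous in $t$, given that $\varphi$ is merely $C^1$ and the spatial domain moves, so that the orthogonal system of the previous paragraph lives on a $t$-dependent window that is in general strictly shorter than a full period $2$. Keeping careful track of the weight $\varphi'$ under the changes of variables, and comparing expansions attached to different length-$2$ periods, is where the real work lies; the algebra of the earlier steps is routine once the substitution $f=F\circ\varphi$ is made.
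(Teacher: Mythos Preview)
Your proposal is correct and follows essentially the same route as the paper: d'Alembert form reduced by the Dirichlet condition to a single profile $f$, the moving boundary condition turned via Moore's equation into the advance relation $F'(s+2)=-\gamma_\eta^{-1}F'(s)$ (the paper writes this equivalently as $f(\alpha(t))=-\gamma_\eta^{-1}f(\beta(t))+C$ and searches for $f=e^{\omega\varphi}$ directly), and then orthogonality/Parseval after the change of variable $s=\varphi(x)$ (the paper phrases this as orthonormality of $\{e^{ni\pi\varphi(t+x)}/\sqrt{2}\}$ in the weighted space $L^2(-\ell(t),\ell(t),\varphi'(t+x)\,dx)$). Your explicit uniqueness argument via propagation of $F'\equiv0$ and your more careful regularity discussion actually go slightly beyond the paper, which dispatches regularity in a single sentence and does not spell out uniqueness.
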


\begin{proof}
$\bullet $ \emph{The exact solution:} This part of solution is slightly
different from the approach in \cite{Vesn1971} where the author considered $1/\eta $ instead of $%
\eta $ in the boundary condition at $x=\ell(t)$. We include it here for the
sake clarity. The general solution of (\ref{waveq}) is given by D'Alembert's
formula 
\begin{equation}
u(x,t)=f(t+x)+g\left( t-x\right) ,  \label{dlmbrt}
\end{equation}%
where $f$ and $g$ are arbitrary continuous functions. The boundary
conditions at the endpoint $x=0$, we have%
\begin{equation*}
f(t)=-g\left( t\right) .
\end{equation*}%
The condition at $x=\ell \left( t\right) \ $implies that%
\begin{equation*}
\left( 1+\eta \ell ^{\prime }\left( t\right) \right) \left[ f^{\prime
}(t+\ell \left( t\right) )-g^{\prime }\left( t-\ell \left( t\right) \right) %
\right] =-\left( \eta +\ell ^{\prime }\left( t\right) \right) \left[
f^{\prime }(t+\ell \left( t\right) )+g^{\prime }\left( t-\ell \left(
t\right) \right) \right] ,
\end{equation*}%
hence%
\begin{equation}
\left[ 1+\eta \ell ^{\prime }\left( t\right) +\eta +\ell ^{\prime }\left(
t\right) \right] f^{\prime }\left( \alpha \left( t\right) \right) =-\left[
1+\eta \ell ^{\prime }\left( t\right) -\eta -\ell ^{\prime }\left( t\right) %
\right] f^{\prime }\left( \beta \left( t\right) \right) ,  \label{+a1}
\end{equation}%
where $\beta \left( t\right) :=t-\ell \left( t\right) $ and $\alpha \left(
t\right) :=t+\ell \left( t\right) $. Then, noting that 
\begin{equation}
\frac{1+\eta \ell ^{\prime }\left( t\right) -\eta -\ell ^{\prime }\left(
t\right) }{1+\eta \ell ^{\prime }\left( t\right) +\eta +\ell ^{\prime
}\left( t\right) }=\frac{1}{\gamma _{\eta }}\frac{\beta ^{\prime }\left(
t\right) }{\alpha ^{\prime }\left( t\right) },
\end{equation}%
we can rewrite (\ref{+a1}) as%
\begin{equation}
\alpha ^{\prime }\left( t\right) f^{\prime }\left( \alpha \left( t\right)
\right) =-\frac{1}{\gamma _{\eta }}\beta ^{\prime }\left( t\right) f^{\prime
}\left( \beta \left( t\right) \right) .
\end{equation}%
By integration, it follows that 
\begin{equation}
f\left( \alpha \left( t\right) \right) =-\frac{1}{\gamma _{\eta }}f\left(
\beta \left( t\right) \right) +C.  \label{a+b}
\end{equation}

Let us assume for the moment that $C=0.$ Then, it is convenient to search
for $f$ in the form $f(\xi )=e^{\omega \varphi \left( \xi \right) }$, for a
constant $\omega $ and some function $\varphi $. Substituting $e^{\omega
\varphi \left( \xi \right) }$\ in (\ref{a+b}), we get 
\begin{equation*}
e^{\omega \left[ \varphi \left( \alpha \left( t\right) \right) -\varphi
\left( \beta \left( t\right) \right) \right] }=-1/\gamma _{\eta }.
\end{equation*}%
Assuming that $\varphi $ satisfies (\ref{Moor}), we are led to the following
cases:

\begin{itemize}
\item[-] If $0\leq \eta <1$, then $\gamma _{\eta }\geq 1$ and we get 
\begin{equation*}
e^{\omega \left[ \varphi \left( \alpha \left( t\right) \right) -\varphi
\left( \beta \left( t\right) \right) \right] }=e^{\left( 2n+1\right) i\pi
-\ln \gamma _{\eta }}.
\end{equation*}%
Solving this equation for $\omega $, we obtain a sequence of values $\omega
_{n},n\in 
\mathbb{Z}
,$ where 
\begin{equation*}
\omega _{n}=\frac{2n+1}{2}i\pi -\frac{1}{2}\ln \gamma _{\eta }.
\end{equation*}

\item[-] If $\eta >1$, we have $\gamma _{\eta }<-1$ and we obtain this time 
\begin{equation*}
\omega _{n}=ni\pi -\frac{1}{2}\ln \left\vert \gamma _{\eta }\right\vert ,%
\text{ \ }n\in 
\mathbb{Z}
.
\end{equation*}
\end{itemize}

Thus, if $\eta \geq 0$ and $\eta \neq 1$, we always have $\ln \left\vert
\gamma _{\eta }\right\vert \geq 1$ and $\omega _{n}$ given by (\ref{wn}).

Due to the superposition principal, it follows that $f$ can be written as%
\begin{equation*}
f\left( \xi \right) =\sum_{n\in \mathbb{Z}}c_{n}e^{\omega _{n}\varphi \left(
\xi \right) },\text{ \ \ \ }c_{n}\in 
\mathbb{C}
,
\end{equation*}%
where $c_{n}$ are\ complex coefficients to be determined later. Since $%
f\left( \xi \right) =-g\left( \xi \right) ,$ then D'Alembert's formula for
the solution yields the series 
\begin{equation}
u(x,t)=\sum_{n\in \mathbb{Z}}c_{n}\left( e^{\omega _{n}\varphi \left(
t+x\right) }-e^{\omega _{n}\varphi \left( t-x\right) }\right) ,\text{ \ for }%
0<x<\ell (t)\text{ and }t\geq 0.  \label{phi_cn}
\end{equation}

If $C\neq 0$ in (\ref{a+b}), then we can check that 
\begin{equation*}
f\left( \xi \right) =\frac{C\gamma _{\eta }}{1+\gamma _{\eta }}+\sum_{n\in 
\mathbb{Z}}c_{n}e^{\omega _{n}\varphi \left( \xi \right) },
\end{equation*}%
solves (\ref{a+b}). However, this will not affect the solution of (\ref%
{waveq}) since $f\left( \xi \right) =-g\left( \xi \right) $ and thus the
constant parts of $f$ and $g$ will be cancelled in the expression (\ref%
{phi_cn}).

$\bullet $ \emph{Computing the coefficients }$c_{n}$\emph{:} We extend $%
u(\cdot ,t)$ to an odd function $\tilde{u}(\cdot ,t)$ on the interval $%
\left( -\ell \left( t\right) ,\ell \left( t\right) \right) .$ This ensures
in particular that the boundary condition at $x=0$ is satisfied for $t\geq
0. $ It follows also that $\tilde{u}_{t}(\cdot ,t)$ and $\tilde{u}_{x}(\cdot
,t) $ are respectively an odd and an even function. Going back to (\ref%
{phi_cn}), we infer that 
\begin{equation*}
\tilde{u}_{x}+\tilde{u}_{t}=2\varphi ^{\prime }(t+x)\sum_{n\in 
\mathbb{Z}
}\omega _{n}c_{n}e^{\omega _{n}\varphi (t+x)}\smallskip ,
\end{equation*}%
for $x\in \left( -\ell \left( t\right) ,\ell \left( t\right) \right) $ and $%
t\geq 0$. Using the definition of $\omega _{n},$ we get%
\begin{equation}
\tilde{u}_{x}+\tilde{u}_{t}=\left\{ 
\begin{array}{ll}
\displaystyle2e^{\frac{1}{2}\left( i\pi -\ln \gamma _{\eta }\right) \varphi
(t+x)}\varphi ^{\prime }(t+x)\sum_{n\in 
\mathbb{Z}
}\omega _{n}c_{n}e^{ni\pi \varphi (t+x)}\medskip , & \text{if }0\leq \eta <1,
\\ 
\displaystyle2e^{-\frac{1}{2}\ln \left\vert \gamma _{\eta }\right\vert
\varphi (t+x)}\varphi ^{\prime }(t+x)\sum_{n\in 
\mathbb{Z}
}\omega _{n}c_{n}e^{ni\pi \varphi (t+x)}, & \text{if }1<\eta <+\infty ,%
\end{array}%
\right.  \label{29}
\end{equation}%
which implies that 
\begin{equation}
\sum_{n\in 
\mathbb{Z}
}\omega _{n}c_{n}e^{ni\pi \varphi (t+x)}=\left\{ 
\begin{array}{ll}
\frac{1}{2\varphi ^{\prime }(t+x)}\displaystyle e^{\frac{1}{2}\left( -i\pi
+\ln \gamma _{\eta }\right) \varphi (t+x)}\left( \tilde{u}_{x}+\tilde{u}%
_{t}\right) \medskip , & \text{if }0\leq \eta <1, \\ 
\frac{1}{2\varphi ^{\prime }(t+x)}\displaystyle e^{\frac{1}{2}\ln \left\vert
\gamma _{\eta }\right\vert \varphi (t+x)}\left( \tilde{u}_{x}+\tilde{u}%
_{t}\right) , & \text{if }1<\eta <+\infty .%
\end{array}%
\right.  \label{30}
\end{equation}

Taking into account that $\left\{ e^{ni\pi \varphi (t+x)}/\sqrt{2}\right\}
_{n\in \mathbb{%
\mathbb{Z}
}}$ is an orthonormal basis of the weighted space $L^{2}\left( -\ell \left(
t\right) ,\ell \left( t\right) ,\varphi ^{\prime }\left( t+x\right)
dx\right) ,$ we deduce that%
\begin{equation*}
\omega _{n}c_{n}=\left\{ 
\begin{array}{ll}
\displaystyle\frac{1}{4}\int_{-\ell \left( t\right) }^{\ell \left( t\right)
}e^{-\frac{1}{2}\left( i\pi -\ln \gamma _{\eta }\right) \varphi (t+x)}\left( 
\tilde{u}_{x}+\tilde{u}_{t}\right) e^{-ni\pi \varphi (t+x)}dx\medskip , & 
\text{if }0\leq \eta <1, \\ 
\displaystyle\frac{1}{4}\int_{-\ell \left( t\right) }^{\ell \left( t\right)
}e^{\frac{1}{2}\ln \left\vert \gamma _{\eta }\right\vert \varphi
(t+x)}\left( \tilde{u}_{x}+\tilde{u}_{t}\right) e^{-ni\pi \varphi (t+x)}dx,
& \text{if }1<\eta <+\infty ,%
\end{array}%
\right.
\end{equation*}%
for\ $n\in 
\mathbb{Z}
$. Whether $0\leq \eta <1$ or $1<\eta <+\infty ,$ in both cases, we have%
\begin{equation}
c_{n}=\frac{1}{4\omega _{n}}\int_{-\ell \left( t\right) }^{\ell \left(
t\right) }\left( \tilde{u}_{x}+\tilde{u}_{t}\right) e^{-\omega _{n}\varphi
(t+x)}dx\text{, \ \ \ for\ }n\in 
\mathbb{Z}
\text{.}  \label{cn=}
\end{equation}

Taking $t=0$, we obtain (\ref{cn}) as claimed.

$\bullet $ \emph{Regularity of the solution:} As a consequence of Parseval's
equality, we get%
\begin{equation*}
\sum_{n\in 
\mathbb{Z}
}\left\vert \omega _{n}c_{n}\right\vert ^{2}=\left\{ 
\begin{array}{ll}
\displaystyle\frac{1}{8}\int_{-\ell \left( t\right) }^{\ell \left( t\right)
}\left\vert e^{-\frac{1}{2}\left( i\pi -\ln \gamma _{\eta }\right) \varphi
(t+x)}\right\vert ^{2}\left( \tilde{u}_{x}+\tilde{u}_{t}\right) ^{2}\frac{dx%
}{\varphi ^{\prime }(t+x)}\medskip , & \text{if }0\leq \eta <1 \\ 
\displaystyle\frac{1}{8}\int_{-\ell \left( t\right) }^{\ell \left( t\right)
}\left\vert e^{\frac{1}{2}\ln \left\vert \gamma _{\eta }\right\vert \varphi
(t+x)}\right\vert ^{2}\left( \tilde{u}_{x}+\tilde{u}_{t}\right) ^{2}\frac{dx%
}{\varphi ^{\prime }(t+x)}, & \text{if }1<\eta <+\infty .%
\end{array}%
\right.
\end{equation*}%
Whether $0\leq \eta <1$ or $1<\eta <+\infty ,$ the two cases of the
precedent identity can be written as%
\begin{equation}
\sum_{n\in 
\mathbb{Z}
}\left\vert \omega _{n}c_{n}\right\vert ^{2}=\frac{1}{8}\int_{-\ell \left(
t\right) }^{\ell \left( t\right) }\left( \tilde{u}_{x}+\tilde{u}_{t}\right)
^{2}e^{\ln \left\vert \gamma _{\eta }\right\vert \varphi (t+x)}\frac{dx}{%
\varphi ^{\prime }(t+x)}.  \label{ncn+}
\end{equation}%
Due to (\ref{mono}) and (\ref{ic}), we have for $t=0,$%
\begin{equation*}
\frac{1}{\sqrt{\varphi ^{\prime }(x)}}e^{\ln \left\vert \gamma _{\eta
}\right\vert \varphi (t+x)}\left( \tilde{u}_{x}^{0}+\tilde{u}^{1}\right) \in
L^{2}\left( -L,L\right)
\end{equation*}%
and (\ref{ncn}) follows.

Due to the continuity and differentiability of $\varphi \left( t+x\right) $
and the exponential function, and since $\left\vert w_{n}\right\vert
=O\left( n\right) $ for large values of $n$, the regularity result (\ref%
{regul}) follows from the convergences of the series of the solution (\ref%
{phi_cn}) and its derivatives.
\end{proof}

\section{The undamped case}

In this section, we show some results for the undamped case, i.e. $\eta =0$
in Problem (\ref{waveq}). To know wether the energy is increasing or in
creasing, we compute $E_{\ell }^{\prime }\left( t\right) .$ Thus, using
Leibniz's rule for differentiation under the integral sign, we get%
\begin{equation*}
E_{\ell }^{\prime }\left( t\right) =\frac{1}{2}\ell ^{\prime }(t)\left[
u_{x}^{2}\left( \ell (t),t\right) +u_{t}^{2}\left( \ell (t),t\right) \right]
+\int_{0}^{\ell \left( t\right) }u_{t}u_{tt}+u_{x}u_{tx}dx.
\end{equation*}%
Since $u_{tt}=u_{xx}$ , then $u_{t}u_{tt}+u_{x}u_{tx}=\left(
u_{t}u_{x}\right) _{x}\ $and it follows%
\begin{equation}
E_{\ell }^{\prime }\left( t\right) =\frac{1}{2}\ell ^{\prime }(t)\left[
u_{x}^{2}\left( \ell (t),t\right) +u_{t}^{2}\left( \ell (t),t\right) \right]
+u_{t}u_{x}\left( \ell (t),t\right) .  \label{dE}
\end{equation}
Then, we have the following result.

\begin{lemma}
The energy of solution of Problem \emph{(\ref{waveq})}, with $\eta =0$,
satisfies%
\begin{equation*}
E_{\ell }^{\prime }\left( t\right) =-\frac{\ell ^{\prime }(t)}{2}\left(
1-\left\vert \ell ^{\prime }\left( t\right) \right\vert ^{2}\right)
u_{t}^{2}\left( \ell \left( t\right) ,t\right) ,\ \ \text{for }t>0.
\end{equation*}
\end{lemma}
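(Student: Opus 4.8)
The plan is to start from the identity \eqref{dE} for $E_\ell'(t)$ and eliminate $u_x(\ell(t),t)$ using the boundary condition, which for $\eta=0$ reduces to $u_x(\ell(t),t)+\ell'(t)\,u_t(\ell(t),t)=0$, i.e. $u_x(\ell(t),t)=-\ell'(t)\,u_t(\ell(t),t)$. Substituting this into \eqref{dE} turns everything into an expression in $u_t(\ell(t),t)$ alone.

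Concretely, I would write $u_x^2(\ell(t),t)=|\ell'(t)|^2\,u_t^2(\ell(t),t)$ and $u_t u_x(\ell(t),t)=-\ell'(t)\,u_t^2(\ell(t),t)$, so that
\begin{equation*}
E_\ell'(t)=\tfrac12\,\ell'(t)\bigl(|\ell'(t)|^2+1\bigr)u_t^2(\ell(t),t)-\ell'(t)\,u_t^2(\ell(t),t)
=\tfrac12\,\ell'(t)\bigl(|\ell'(t)|^2-1\bigr)u_t^2(\ell(t),t),
\end{equation*}
which is exactly the claimed formula after factoring out the sign, $E_\ell'(t)=-\tfrac{\ell'(t)}{2}\bigl(1-|\ell'(t)|^2\bigr)u_t^2(\ell(t),t)$. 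This is essentially a one-line algebraic manipulation, so the proof is short.

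There is one point that deserves a remark rather than being a genuine obstacle: the trace $u_t(\ell(t),t)$ (equivalently $u_x(\ell(t),t)$) must make sense and the boundary condition must hold pointwise in $t$, which is justified by the regularity \eqref{regul} from Theorem \ref{thexist1} and the series representation \eqref{exact0}; strictly, the computation is first carried out for smooth data and then extended by density, but since \eqref{dE} is already stated in the excerpt I may simply invoke it. The only thing to be careful about is not to drop the assumption \eqref{tlike}, $|\ell'(t)|<1$, which guarantees $1-|\ell'(t)|^2>0$ and hence fixes the sign of $E_\ell'(t)$ as $-\operatorname{sign}\ell'(t)$ times a nonnegative quantity — this is what gives the "decays if expanding, grows if shrinking" dichotomy mentioned in the introduction, though it is not part of the Lemma statement itself. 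So I expect no real difficulty; the "hard part," such as it is, is merely bookkeeping the substitution of the boundary relation into the two terms of \eqref{dE} correctly.
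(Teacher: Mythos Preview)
Your proposal is correct and follows exactly the paper's approach: substitute the $\eta=0$ boundary relation $u_x(\ell(t),t)=-\ell'(t)\,u_t(\ell(t),t)$ into the identity \eqref{dE} and simplify. In fact you state the boundary condition more carefully than the paper does (the paper's displayed line omits the factor $\ell'(t)$, though its subsequent computation uses the correct relation).
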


\begin{proof}
The boundary condition at $x=\ell \left( t\right) ,$ with $\eta =0,$ reads 
\begin{equation*}
u_{x}\left( \ell \left( t\right) ,t\right) =-u_{t}\left( \ell \left(
t\right) ,t\right) ,\ \ \text{for }t>0.
\end{equation*}%
Reporting this in (\ref{dE}), we get%
\begin{equation*}
E_{\ell }^{\prime }\left( t\right) =\frac{1}{2}\ell ^{\prime }(t)\left(
\left\vert \ell ^{\prime }\left( t\right) \right\vert ^{2}+1\right)
u_{t}^{2}\left( \ell \left( t\right) ,t\right) -\ell ^{\prime }\left(
t\right) u_{t}^{2}\left( \ell (t),t\right)
\end{equation*}%
and the lemma follows.
\end{proof}

\begin{remark}
The above lemma means that%
\begin{equation}
\Vert \text{ \ \ }E_{\ell }\left( t\right) \text{ is nondeacreasing if }%
-1<\ell ^{\prime }\left( t\right) \leq 0,\text{ and nonincreasing if }0\leq
\ell ^{\prime }\left( t\right) <1.  \label{E--ll}
\end{equation}%
The same result holds for the multidimensional wave equation, in a
time-dependent domain, with homogenous Dirichlet boundary conditions, see 
\cite{BaCh1981}.
\end{remark}

The next theorem show that the asymptotic behaviour of $E_{\ell }\left(
t\right) $ is dictated by $\varphi ^{\prime }.$

\begin{theorem}
\label{th0damp}Under the assumptions \emph{(\ref{tlike}), (\ref{mono})} and 
\emph{(\ref{ic}),} the solution of Problem \emph{(\ref{waveq})} satisfies%
\begin{multline}
\int_{0}^{\ell \left( t\right) }\left( \frac{1}{\varphi ^{\prime }(t+x)}+%
\frac{1}{\varphi ^{\prime }(t-x)}\right) \left( u_{x}^{2}+u_{t}^{2}\right) \\
+2\left( \frac{1}{\varphi ^{\prime }(t+x)}-\frac{1}{\varphi ^{\prime }(t-x)}%
\right) u_{x}u_{t}\ dx=4\mathcal{S}_{0},\text{ \ \ for }t\geq 0,
\label{E=4A}
\end{multline}%
\ \ where $\mathcal{S}_{0}:=\frac{\pi ^{2}}{2}\sum_{n\in 
\mathbb{Z}
}\left\vert \left( 2n+1\right) c_{n}\right\vert ^{2}$. Moreover, it holds
that%
\begin{equation}
\mathcal{S}_{0}m\left( t\right) \leq E_{\ell }\left( t\right) \leq \mathcal{S%
}_{0}M\left( t\right) ,\text{ \ \ for}\ t\geq 0,  \label{E0}
\end{equation}%
where $m\left( t\right) $ and $M\left( t\right) $ are defined in \emph{(\ref%
{m.M})}.
\end{theorem}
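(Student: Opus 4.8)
The plan is to read the result directly off the Parseval identity \eqref{ncn+} established in the proof of Theorem~\ref{thexist1}, specialized to $\eta=0$. First I would record the elementary reductions available when $\eta=0$: then $\gamma_\eta=1$, hence $\ln|\gamma_\eta|=0$ and, by \eqref{wn}, $\omega_n=\tfrac{2n+1}{2}i\pi$, so $|\omega_n|^2=\tfrac{(2n+1)^2\pi^2}{4}$ and therefore
\begin{equation*}
\mathcal{S}_0=\frac{\pi^2}{2}\sum_{n\in\mathbb{Z}}|(2n+1)c_n|^2=2\sum_{n\in\mathbb{Z}}|\omega_n c_n|^2 .
\end{equation*}
Moreover, identity \eqref{ncn+} holds for every $t\ge0$, and with $\ln|\gamma_\eta|=0$ it reduces to
\begin{equation*}
8\sum_{n\in\mathbb{Z}}|\omega_n c_n|^2=\int_{-\ell(t)}^{\ell(t)}\bigl(\tilde u_x+\tilde u_t\bigr)^2\,\frac{dx}{\varphi'(t+x)} .
\end{equation*}

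To obtain \eqref{E=4A}, I would split the last integral over $(-\ell(t),0)$ and $(0,\ell(t))$. On the first piece substitute $x\mapsto -x$: since $\tilde u$ is the odd extension of $u$, the function $\tilde u_x$ is even while $\tilde u_t$ is odd, so $(\tilde u_x+\tilde u_t)^2$ becomes $(u_x-u_t)^2$ and $\varphi'(t+x)$ becomes $\varphi'(t-x)$. This gives
\begin{equation*}
8\sum_{n\in\mathbb{Z}}|\omega_n c_n|^2=\int_0^{\ell(t)}\frac{(u_x+u_t)^2}{\varphi'(t+x)}\,dx+\int_0^{\ell(t)}\frac{(u_x-u_t)^2}{\varphi'(t-x)}\,dx .
\end{equation*}
Expanding the two squares, collecting the coefficients of $u_x^2+u_t^2$ and of $u_xu_t$, and using $8\sum_n|\omega_n c_n|^2=4\mathcal{S}_0$, yields exactly \eqref{E=4A}.

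For \eqref{E0} I would argue pointwise. Fix $t\ge0$ and, for $x\in[0,\ell(t)]$, put $a=1/\varphi'(t+x)$ and $b=1/\varphi'(t-x)$, both positive by \eqref{mono}. The integrand in \eqref{E=4A} can be rewritten as
\begin{equation*}
(a+b)(u_x^2+u_t^2)+2(a-b)u_xu_t=a(u_x+u_t)^2+b(u_x-u_t)^2
\end{equation*}
(equivalently, a quadratic form in $(u_x,u_t)$ with eigenvalues $2a$ and $2b$), and since $(u_x+u_t)^2+(u_x-u_t)^2=2(u_x^2+u_t^2)$, this quantity lies between $2\min(a,b)(u_x^2+u_t^2)$ and $2\max(a,b)(u_x^2+u_t^2)$. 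By the definition \eqref{m.M}, $m(t)\le\varphi'(t\pm x)\le M(t)$, so $\min(a,b)\ge 1/M(t)$ and $\max(a,b)\le 1/m(t)$. Integrating over $[0,\ell(t)]$, recalling \eqref{E}, and using \eqref{E=4A} gives
\begin{equation*}
\frac{4}{M(t)}E_\ell(t)\le 4\mathcal{S}_0\le\frac{4}{m(t)}E_\ell(t),
\end{equation*}
which is \eqref{E0} after dividing by $4$ and rearranging.

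The only delicate point is the bookkeeping in the second step: correctly tracking the parities of $\tilde u_x$ and $\tilde u_t$ under the odd extension and under the change of variables, and making sure the differentiated series for $u$ represents $\tilde u_x+\tilde u_t$ in the weighted $L^2$ space so that \eqref{ncn+} may be invoked at the prescribed time $t$ — this last point is furnished by the regularity \eqref{regul} of Theorem~\ref{thexist1}. Once \eqref{E=4A} is available, the passage to \eqref{E0} is a routine estimate for a $2\times2$ quadratic form.
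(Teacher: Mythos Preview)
Your proof is correct and follows essentially the same approach as the paper: both start from the Parseval identity \eqref{ncn+} at $\eta=0$, exploit the parities of $\tilde u_x$ and $\tilde u_t$ to reduce to $[0,\ell(t)]$, and then bound the resulting quadratic form. The only cosmetic differences are that the paper derives a second full-interval identity by substituting $x\mapsto -x$ and then sums and halves, whereas you split the single integral at $0$; and for \eqref{E0} the paper uses $\pm 2u_xu_t\le u_x^2+u_t^2$ together with $(a+b)\pm|a-b|=2\max/\min\{a,b\}$, while you keep the integrand factored as $a(u_x+u_t)^2+b(u_x-u_t)^2$ --- these are equivalent manipulations.
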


\begin{proof}
If $\eta =0$ then $\gamma _{\eta }=1$ and $\omega _{n}=\left( 2n+1\right)
i\pi /2$. The identity (\ref{ncn+}) becomes%
\begin{equation}
\frac{1}{8}\int_{-\ell \left( t\right) }^{\ell \left( t\right) }\left( 
\tilde{u}_{x}+\tilde{u}_{t}\right) ^{2}\frac{dx}{\varphi ^{\prime }(t+x)}=%
\frac{\pi ^{2}}{4}\sum_{n\in 
\mathbb{Z}
}\left\vert \left( 2n+1\right) c_{n}\right\vert ^{2}=\frac{1}{2}\mathcal{S}%
_{0},\text{ \ \ for}\ t\geq 0.  \label{eq-per+}
\end{equation}%
Since $\tilde{u}_{x}$ is an even function of $x$ and that $\tilde{u}_{t}$ is
an odd one, then changing $x$ by $-x$ in the last formula, we also obtain%
\begin{equation}
\int_{-\ell \left( t\right) }^{\ell \left( t\right) }\left( \tilde{u}_{x}-%
\tilde{u}_{t}\right) ^{2}\frac{dx}{\varphi ^{\prime }(t-x)}=4\mathcal{S}_{0},%
\text{ \ \ for}\ t\geq 0.  \label{eq-per-}
\end{equation}

Taking the sum of (\ref{eq-per+}) and (\ref{eq-per-}),\ we obtain 
\begin{equation*}
\int_{-\ell \left( t\right) }^{\ell \left( t\right) }\left( \tilde{u}_{x}+%
\tilde{u}_{t}\right) ^{2}\frac{dx}{\varphi ^{\prime }(t+x)}+\int_{-\ell
\left( t\right) }^{\ell \left( t\right) }\left( \tilde{u}_{x}-\tilde{u}%
_{t}\right) ^{2}\frac{dx}{\varphi ^{\prime }(t-x)}=8\mathcal{S}_{0}.
\end{equation*}%
Expanding $\left( u_{x}\pm u_{t}\right) ^{2}$ and collecting similar terms,
we get 
\begin{multline}
\int_{-\ell \left( t\right) }^{\ell \left( t\right) }\left( \frac{1}{\varphi
^{\prime }(t+x)}+\frac{1}{\varphi ^{\prime }(t-x)}\right) \left( \tilde{u}%
_{x}^{2}+\tilde{u}_{t}^{2}\right)  \label{est} \\
+2\left( \frac{1}{\varphi ^{\prime }(t+x)}-\frac{1}{\varphi ^{\prime }(t-x)}%
\right) \tilde{u}_{x}\tilde{u}_{t}\ dx=8\mathcal{S}_{0},\text{ \ \ for}\
t\geq 0.
\end{multline}%
As the function under the integral sign is even, then (\ref{E=4A}) follows.

Next, we use the algebraic inequality $\pm 2u_{x}u_{t}\leq
u_{t}^{2}+u_{x}^{2}$ to obtain%
\begin{multline*}
\int_{0}^{\ell \left( t\right) }\left( \frac{1}{\varphi ^{\prime }(t+x)}+%
\frac{1}{\varphi ^{\prime }(t-x)}-\left\vert \frac{1}{\varphi ^{\prime }(t+x)%
}-\frac{1}{\varphi ^{\prime }(t-x)}\right\vert \right) \left(
u_{x}^{2}+u_{t}^{2}\right) \ dx\leq 4\mathcal{S}_{0} \\
\leq \int_{0}^{\ell \left( t\right) }\left( \frac{1}{\varphi ^{\prime }(t+x)}%
+\frac{1}{\varphi ^{\prime }(t-x)}+\left\vert \frac{1}{\varphi ^{\prime
}(t+x)}-\frac{1}{\varphi ^{\prime }(t-x)}\right\vert \right) \left(
u_{x}^{2}+u_{t}^{2}\right) \ dx,
\end{multline*}%
for$\ t\geq 0.$ Recalling that 
\begin{equation}
\left( a+b\right) -\left\vert a-b\right\vert =2\min \left\{ a,b\right\} \ \
\ \text{and}\ \ \ \left( a+b\right) +\left\vert a-b\right\vert =2\max
\left\{ a,b\right\} ,  \label{max}
\end{equation}%
for $a,b\in 
\mathbb{R}
,$ then%
\begin{multline*}
\int_{0}^{\ell \left( t\right) }\min \left\{ \frac{1}{\varphi ^{\prime }(t+x)%
},\frac{1}{\varphi ^{\prime }(t-x)}\right\} \left(
u_{x}^{2}+u_{t}^{2}\right) \ dx\leq 2\mathcal{S}_{0} \\
\leq \int_{0}^{\ell \left( t\right) }\max \left\{ \frac{1}{\varphi ^{\prime
}(t+x)},\frac{1}{\varphi ^{\prime }(t-x)}\right\} \left(
u_{x}^{2}+u_{t}^{2}\right) \ dx,
\end{multline*}%
for$\ t\geq 0.$ Recalling that $E_{\ell }\left( t\right) $ is defined by (%
\ref{E}), we deduce that%
\begin{equation*}
\min_{x\in \left[ 0,\ell \left( t\right) \right] }\left\{ \frac{1}{\varphi
^{\prime }(t+x)},\frac{1}{\varphi ^{\prime }(t-x)}\right\} E_{\ell }\left(
t\right) \leq \mathcal{S}_{0}\leq \max_{x\in \left[ 0,\ell \left( t\right) %
\right] }\left\{ \frac{1}{\varphi ^{\prime }(t+x)},\frac{1}{\varphi ^{\prime
}(t-x)}\right\} E_{\ell }\left( t\right) ,
\end{equation*}%
hence%
\begin{equation*}
\min_{x\in \left[ 0,\ell \left( t\right) \right] }\left\{ \varphi ^{\prime
}(t+x),\varphi ^{\prime }(t-x)\right\} \mathcal{S}_{0}\leq E_{\ell }\left(
t\right) \leq \max_{x\in \left[ 0,\ell \left( t\right) \right] }\left\{
\varphi ^{\prime }(t+x),\varphi ^{\prime }(t-x)\right\} \mathcal{S}_{0},
\end{equation*}%
which is (\ref{E0}).
\end{proof}
\begin{remark} An estimation analogue to (\ref{E0}) was obtained for the case of homogeneous boundary conditions at both ends, see \cite{HaHo2019}.
\end{remark}
If $\varphi ^{\prime }$ is monotone then the asymptotic behaviour is
dictated by $\ell \left( t\right) $ and we have the following refinements.

\begin{corollary}
\label{Cor1}Under the assumption of Theorem \ref{th0damp}, assume that \ 
\begin{equation}
\varphi ^{\prime }\text{ is monotone for }t\in \lbrack t_{0},t_{1}],\text{ }%
0\leq t_{0}<t_{1}.  \label{monot}
\end{equation}
\end{corollary}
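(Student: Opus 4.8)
The statement should be a refinement of the estimate \eqref{E0}: when $\varphi^{\prime}$ is monotone one can replace the $\min$ and $\max$ in $m(t)$ and $M(t)$ by evaluations at the single endpoints $\beta(t)=t-\ell(t)$ and $\alpha(t)=t+\ell(t)$. The plan is therefore simply to feed the hypothesis \eqref{monot} into Theorem \ref{th0damp}. I would first observe that, because $\beta^{\prime}=1-\ell^{\prime}>0$ and $\alpha^{\prime}=1+\ell^{\prime}>0$ by \eqref{tlike}, the functions $\beta$ and $\alpha$ are increasing, so as $t$ runs over $[t_{0},t_{1}]$ every argument $t\pm x$ with $x\in[0,\ell(t)]$ stays in the fixed interval $[\beta(t_{0}),\alpha(t_{1})]$, on which \eqref{monot} applies. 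The two monotonicity directions being mirror images of one another, suppose $\varphi^{\prime}$ is nondecreasing there. For $x\in[0,\ell(t)]$ one has $\beta(t)\le t-x\le t\le t+x\le\alpha(t)$, hence $\varphi^{\prime}(t-x)\le\varphi^{\prime}(t+x)$, which resolves the inner extrema: $\min\{\varphi^{\prime}(t-x),\varphi^{\prime}(t+x)\}=\varphi^{\prime}(t-x)$ and $\max\{\varphi^{\prime}(t-x),\varphi^{\prime}(t+x)\}=\varphi^{\prime}(t+x)$.

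Next I would carry out the outer optimization over $x$. The map $x\mapsto\varphi^{\prime}(t-x)$ is the composition of the nondecreasing $\varphi^{\prime}$ with the decreasing $x\mapsto t-x$, hence nonincreasing, so $m(t)=\varphi^{\prime}(t-\ell(t))=\varphi^{\prime}(\beta(t))$; symmetrically $M(t)=\varphi^{\prime}(t+\ell(t))=\varphi^{\prime}(\alpha(t))$. Substituting into \eqref{E0} yields
\begin{equation*}
\mathcal{S}_{0}\,\varphi^{\prime}(\beta(t))\le E_{\ell}(t)\le\mathcal{S}_{0}\,\varphi^{\prime}(\alpha(t)),\qquad t\in[t_{0},t_{1}],
\end{equation*}
with $\beta$ and $\alpha$ interchanged in the nonincreasing case. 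Finally, to eliminate the undetermined constant $\mathcal{S}_{0}$, I would evaluate these bounds at the reference time $t_{0}$: from $\mathcal{S}_{0}\,\varphi^{\prime}(\beta(t_{0}))\le E_{\ell}(t_{0})\le\mathcal{S}_{0}\,\varphi^{\prime}(\alpha(t_{0}))$ one extracts $E_{\ell}(t_{0})/\varphi^{\prime}(\alpha(t_{0}))\le\mathcal{S}_{0}\le E_{\ell}(t_{0})/\varphi^{\prime}(\beta(t_{0}))$, and chaining gives
\begin{equation*}
\frac{\varphi^{\prime}(\beta(t))}{\varphi^{\prime}(\alpha(t_{0}))}\,E_{\ell}(t_{0})\le E_{\ell}(t)\le\frac{\varphi^{\prime}(\alpha(t))}{\varphi^{\prime}(\beta(t_{0}))}\,E_{\ell}(t_{0}),\qquad t_{0}\le t\le t_{1},
\end{equation*}
exhibiting the dependence on $\ell$ through $\beta$ and $\alpha$ alone.

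There is no serious obstacle: the argument is bookkeeping on where a monotone function attains its extrema over the moving window $[\beta(t),\alpha(t)]$. The only point requiring care is that the monotonicity interval in \eqref{monot} be large enough to contain every argument $t\pm x$ that occurs, i.e. that $[\beta(t_{0}),\alpha(t_{1})]$ lie in the set where $\varphi^{\prime}$ is monotone (which is what \eqref{monot} is meant to encode), together with keeping the two monotonicity directions separate; once this is settled the conclusion follows by direct substitution into Theorem \ref{th0damp}. If sharper constants were desired one could instead start from the identity \eqref{E=4A} and estimate the cross term $2\bigl(\tfrac{1}{\varphi^{\prime}(t+x)}-\tfrac{1}{\varphi^{\prime}(t-x)}\bigr)u_{x}u_{t}$ using its fixed sign together with $\pm 2u_{x}u_{t}\le u_{x}^{2}+u_{t}^{2}$, but this does not improve on \eqref{E0} and I would not pursue it.
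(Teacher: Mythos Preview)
Your computation of $m(t)$ and $M(t)$ under a monotonicity hypothesis is correct, and the resulting bound $\mathcal{S}_{0}\varphi'(\beta(t))\le E_{\ell}(t)\le\mathcal{S}_{0}\varphi'(\alpha(t))$ is exactly \eqref{ll-}. However, your case split is by the \emph{direction} of the monotonicity of $\varphi'$, whereas the corollary as stated in the paper splits by the sign of $\ell'$ and then \emph{asserts} which direction $\varphi'$ must be monotone in each case (and also asserts the monotonicity of $E_{\ell}$). That assertion is part of the conclusion, and you have not proved it. The paper supplies the missing link by differentiating Moor's equation \eqref{Moor} to obtain
\[
(1+\ell'(t))\,\varphi'(t+\ell(t))=(1-\ell'(t))\,\varphi'(t-\ell(t)),
\]
from which the sign of $\ell'$ alone forces $\varphi'(t-\ell(t))\le\varphi'(t+\ell(t))$ when $\ell'\le 0$ and the reverse when $\ell'\ge 0$. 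Combined with the bare assumption that $\varphi'$ is monotone, this pins down the direction, and the monotonicity of $E_{\ell}$ comes from \eqref{E--ll}. Your argument recovers the inequalities once the direction is known, but without the Moor-equation step you cannot match the corollary's dichotomy in terms of $\ell'$.

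A minor point: your last display, eliminating $\mathcal{S}_{0}$ in favour of $E_{\ell}(t_{0})$, is not part of this corollary; it is the content of the following one (Corollary~2 in the paper). For Corollary~\ref{Cor1} itself you should stop at the $\mathcal{S}_{0}$ bounds.
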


\begin{itemize}
\item If $-1<\ell ^{\prime }\left( t\right) \leq 0$ on $[t_{0},t_{1}]$, then 
$\varphi ^{\prime }$and $E_{\ell }\left( t\right) $ are nondeacreasing and%
\begin{equation}
\mathcal{S}_{0}\varphi ^{\prime }(t-\ell \left( t\right) )\leq E_{\ell
}\left( t\right) \leq \mathcal{S}_{0}\varphi ^{\prime }(t+\ell \left(
t\right) ),\text{ \ \ for}\ t\in \lbrack t_{0},t_{1}].  \label{ll-}
\end{equation}

\item If $0\leq \ell ^{\prime }\left( t\right) <1$ on $[t_{0},t_{1}]$, then $%
\varphi ^{\prime }$and $E_{\ell }\left( t\right) $ are nonincreasing and%
\begin{equation}
\mathcal{S}_{0}\varphi ^{\prime }(t+\ell \left( t\right) )\leq E_{\ell
}\left( t\right) \leq \mathcal{S}_{0}\varphi ^{\prime }\left( t-\ell \left(
t\right) \right) ,\text{ \ \ for}\ t\in \lbrack t_{0},t_{1}].  \label{ll+}
\end{equation}
\end{itemize}

\begin{proof}
We already have (\ref{E--ll}). The derivation of the identity (\ref{Moor})
yields 
\begin{equation}
\frac{1+\ell ^{\prime }\left( t\right) }{1-\ell ^{\prime }\left( t\right) }%
\varphi ^{\prime }(t+\ell \left( t\right) )=\varphi ^{\prime }(t-\ell \left(
t\right) ).  \label{+t-t}
\end{equation}%
Taking into consideration the variation of the function $s\mapsto \left(
1+s\right) /\left( 1-s\right) $ on the interval $\left( -1,1\right) $, it
follows that%
\begin{equation*}
0<\frac{1+\ell ^{\prime }\left( t\right) }{1-\ell ^{\prime }\left( t\right) }%
\leq 1\text{ \ if }-1<\ell ^{\prime }\left( t\right) \leq 0\ \ \ \text{and \
\ }1\leq \frac{1+\ell ^{\prime }\left( t\right) }{1-\ell ^{\prime }\left(
t\right) } \ \text{ \ if }0\leq \ell ^{\prime }\left( t\right) <1,
\end{equation*}%
hence 
\begin{gather}
\text{if \ }-1<\ell ^{\prime }\left( t\right) \leq 0,\text{ then \ }m\left(
t\right) =\varphi ^{\prime }(t-\ell \left( t\right) )\leq \varphi ^{\prime
}(t+\ell \left( t\right) )=M\left( t\right) ,  \label{ll--} \\
\text{if \ }0\leq \ell ^{\prime }\left( t\right) <1,\text{ then \ }M\left(
t\right) =\varphi ^{\prime }(t-\ell \left( t\right) )\geq \varphi ^{\prime
}(t+\ell \left( t\right) )=m\left( t\right) .  \label{ll++}
\end{gather}%
Of course, if $\varphi ^{\prime }$ is monotone and $-1<\ell ^{\prime }\left(
t\right) \leq 0$ then (\ref{ll--}) means that $\varphi ^{\prime }$ is
necessarily nondecreasing, and so is $\varphi ^{\prime }(t\pm \ell \left(
t\right) )\ $and $E_{\ell }\left( t\right) .$ The same argument can be made
when $0\leq \ell ^{\prime }\left( t\right) <1$. This shows the corollary.
\end{proof}

\begin{remark}
The assumption (\ref{monot}) is satisfied in all the examples of the last
section.
\end{remark}

Recall that $\mathcal{S}_{0}$ can be computed using the initial data by
setting $t=0$ in (\ref{E0}). If one needs to compare $E_{\ell }\left(
t\right) $ with $E_{\ell }\left( t_{0}\right) \ $for $0\leq t_{0}<t,$ then
we have the next result.

\begin{corollary}
Under the assumption of Theorem \ref{th0damp}, we have%
\begin{equation}
\frac{m\left( t\right) }{M\left( t_{0}\right) }E_{\ell }\left( t_{0}\right)
\leq E_{\ell }\left( t\right) \leq \frac{M\left( t\right) }{m\left(
t_{0}\right) }E_{\ell }\left( t_{0}\right) ,\text{ \ \ \ for }0\leq t_{0}<t%
\text{.}  \label{Et0}
\end{equation}%
Moreover, if $\varphi $ satisfies (\ref{monot}), then:
\end{corollary}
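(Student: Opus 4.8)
The statement to prove is the last displayed corollary, namely the two-sided comparison
\begin{equation*}
\frac{m(t)}{M(t_{0})}E_{\ell}(t_{0})\le E_{\ell}(t)\le \frac{M(t)}{m(t_{0})}E_{\ell}(t_{0}),\qquad 0\le t_{0}<t,
\end{equation*}
together with its refinement under the monotonicity hypothesis (\ref{monot}). The key observation is that Theorem \ref{th0damp} provides, via (\ref{E0}), a quantity $\mathcal{S}_{0}$ that is \emph{independent of time}: we have $\mathcal{S}_{0}m(s)\le E_{\ell}(s)\le \mathcal{S}_{0}M(s)$ simultaneously for every $s\ge 0$, with the same constant $\mathcal{S}_{0}$. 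So the plan is simply to write this sandwich at the two times $t$ and $t_{0}$ and eliminate $\mathcal{S}_{0}$.

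First I would apply (\ref{E0}) at time $t_{0}$ to get $E_{\ell}(t_{0})\le \mathcal{S}_{0}M(t_{0})$, i.e. $\mathcal{S}_{0}\ge E_{\ell}(t_{0})/M(t_{0})$ (legitimate since $\varphi'>0$ forces $M(t_{0})>0$). Then apply the lower bound in (\ref{E0}) at time $t$, $E_{\ell}(t)\ge \mathcal{S}_{0}m(t)$, and substitute the bound on $\mathcal{S}_{0}$ to obtain $E_{\ell}(t)\ge \bigl(m(t)/M(t_{0})\bigr)E_{\ell}(t_{0})$, which is the left inequality. Symmetrically, from (\ref{E0}) at $t_{0}$ we get $\mathcal{S}_{0}\le E_{\ell}(t_{0})/m(t_{0})$, and the upper bound at $t$, $E_{\ell}(t)\le \mathcal{S}_{0}M(t)$, yields $E_{\ell}(t)\le \bigl(M(t)/m(t_{0})\bigr)E_{\ell}(t_{0})$. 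This establishes (\ref{Et0}).

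For the refinement under (\ref{monot}), I would invoke Corollary \ref{Cor1}: when $\varphi'$ is monotone on $[t_{0},t]$, the relations (\ref{ll--})--(\ref{ll++}) identify $m$ and $M$ explicitly as $\varphi'(s\mp\ell(s))$ (with the sign depending on whether the interval is shrinking or expanding), so (\ref{Et0}) can be rewritten with $m,M$ replaced by the corresponding values of $\varphi'$ at the characteristic points $t\pm\ell(t)$ and $t_{0}\pm\ell(t_{0})$ — exactly as in (\ref{ll-}) and (\ref{ll+}) but now comparing $E_{\ell}(t)$ to $E_{\ell}(t_{0})$ instead of to $\mathcal{S}_{0}$. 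One splits into the two cases $-1<\ell'\le 0$ and $0\le\ell'<1$ on $[t_{0},t]$ and substitutes the appropriate identifications.

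Honestly, there is no real obstacle here: the corollary is a one-line consequence of the time-uniform sandwich already proved in Theorem \ref{th0damp}. The only point requiring a word of care is the positivity of $m(t_{0})$ and $M(t_{0})$ needed to divide, which is immediate from the standing assumption $\varphi'>0$ in (\ref{mono}); and, for the refined version, making sure one states the monotonicity hypothesis on the full interval $[t_{0},t]$ so that Corollary \ref{Cor1} applies there.
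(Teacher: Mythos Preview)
Your proposal is correct and follows essentially the same approach as the paper: apply the sandwich $\mathcal{S}_{0}m(s)\le E_{\ell}(s)\le \mathcal{S}_{0}M(s)$ at both $s=t$ and $s=t_{0}$ and eliminate $\mathcal{S}_{0}$, then invoke the identifications of $m,M$ from Corollary~\ref{Cor1} for the refined versions. The paper's proof is the same one-line elimination, written compactly as the pair of chains $E_{\ell}(t)/M(t)\le \mathcal{S}_{0}\le E_{\ell}(t_{0})/m(t_{0})$ and $E_{\ell}(t_{0})/M(t_{0})\le \mathcal{S}_{0}\le E_{\ell}(t)/m(t)$.
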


\begin{itemize}
\item If $-1<\ell ^{\prime }\left( t\right) \leq 0$ on $[t_{0},t_{1}]$, then 
$\varphi ^{\prime }$and $E_{\ell }\left( t\right) $ are nondecreasing and
satisfy%
\begin{equation}
\frac{\varphi ^{\prime }\left( t-\ell \left( t\right) \right) }{\varphi
^{\prime }\left( t_{0}+\ell \left( t_{0}\right) \right) }E_{\ell }\left(
t_{0}\right) \leq E_{\ell }\left( t\right) \leq \frac{\varphi ^{\prime
}\left( t+\ell \left( t\right) \right) }{\varphi ^{\prime }\left( t_{0}-\ell
\left( t_{0}\right) \right) }E_{\ell }\left( t_{0}\right) ,\text{ \ \ for }%
t\in \lbrack t_{0},t_{1}].  \label{Et0-}
\end{equation}

\item If $0\leq \ell ^{\prime }\left( t\right) <1$ on $[t_{0},t_{1}]$, then $%
\varphi ^{\prime }$and $E_{\ell }\left( t\right) $ are nonincreasing and%
\begin{equation}
\frac{\varphi ^{\prime }\left( t+\ell \left( t\right) \right) }{\varphi
^{\prime }\left( t_{0}-\ell \left( t_{0}\right) \right) }E_{\ell }\left(
t_{0}\right) \leq E_{\ell }\left( t\right) \leq \frac{\varphi ^{\prime
}\left( t-\ell \left( t\right) \right) }{\varphi ^{\prime }\left( t_{0}+\ell
\left( t_{0}\right) \right) }E_{\ell }\left( t_{0}\right) ,\text{ \ \ for }%
t\in \lbrack t_{0},t_{1}].  \label{Et0+}
\end{equation}
\end{itemize}

\begin{proof}
Since (\ref{E0}) holds also for $t=t_{0}$, then the corollary follows from
the inequalities%
\begin{equation*}
\frac{E_{\ell }\left( t\right) }{M\left( t\right) }\leq \mathcal{S}_{0}\leq 
\frac{E_{\ell }\left( t_{0}\right) }{m\left( t_{0}\right) }\text{ \ \ and \
\ }\frac{E_{\ell }\left( t_{0}\right) }{M\left( t_{0}\right) }\leq \mathcal{S%
}_{0}\leq \frac{E_{\ell }\left( t\right) }{m\left( t\right) }.
\end{equation*}
\end{proof}

\section{The damped case}

In this section, we investigate the case with boundary damping when%
\begin{equation*}
\eta >0\text{ \ and \ }\eta \neq 1
\end{equation*}%
in Problem (\ref{waveq}). Let us recall that we still have (\ref{dE}), i.e. 
\begin{equation*}
E_{\ell }^{\prime }\left( t\right) =\frac{1}{2}\ell ^{\prime }(t)\left[
u_{x}^{2}\left( \ell (t),t\right) +u_{t}^{2}\left( \ell (t),t\right) \right]
+u_{t}u_{x}\left( \ell (t),t\right) ,\ \ \text{\ for}\ t\geq 0.
\end{equation*}%
but now the boundary condition at $x=\ell \left( t\right) $ is 
\begin{equation*}
\left( 1+\eta \ell ^{\prime }\left( t\right) \right) u_{x}\left( \ell \left(
t\right) ,t\right) +\left( \eta +\ell ^{\prime }\left( t\right) \right)
u_{t}\left( \ell \left( t\right) ,t\right) =0,\ \ \text{\ for}\ t\geq 0
\end{equation*}

Let us discuss the sign of $E_{\ell }^{\prime }\left( t\right) $ for
different values of $\ell ^{\prime }\left( t\right) .$

\begin{itemize}
\item If the interval is shrinking $-1<\ell ^{\prime }\left( t\right) <0,$
for $t\in \left[ t_{1},t_{2}\right] \ $where $0\leq t_{1}<t_{2},$ then we
have the following cases:

- If $\ell ^{\prime }\left( t\right) =-1/\eta ,$ then the boundary condition
at $x=\ell \left( t\right) $ reads $u_{t}\left( \ell (t),t\right) =0$ and
thus 
\begin{equation*}
E_{\ell }^{\prime }\left( t\right) =-\frac{1}{2\eta }u_{x}^{2}\left( \ell
(t),t\right) \leq 0,\text{ \ for }t\in \left[ t_{1},t_{2}\right] .
\end{equation*}

- If $\ell ^{\prime }\left( t\right) =-\eta ,$ then $u_{x}\left( \ell
(t),t\right) =0$ and thus 
\begin{equation*}
E_{\ell }^{\prime }\left( t\right) =-\frac{1}{2}\eta u_{t}^{2}\left( \ell
(t),t\right) \leq 0,\text{ \ for }t\in \left[ t_{1},t_{2}\right] .
\end{equation*}

- Assume that $\ell ^{\prime }\left( t\right) \notin \left\{ -1/\eta
,0,-\eta \right\} ,$ then after some computation we can rewrite (\ref{dE}) as%
\begin{equation}
E_{\ell }^{\prime }\left( t\right) =-\frac{1}{2}\left( \ell ^{\prime }\left(
t\right) \eta ^{2}+2\eta +\ell ^{\prime }\left( t\right) \right) \frac{%
1-\left\vert \ell ^{\prime }\left( t\right) \right\vert ^{2}}{\left( 1+\eta
\ell ^{\prime }\left( t\right) \right) ^{2}}u_{t}^{2}\left( \ell \left(
t\right) ,t\right) ,\text{ \ for }t\in \left[ t_{1},t_{2}\right] .
\label{E'2}
\end{equation}%
The sign of $E_{\ell }^{\prime }\left( t\right) $ is opposite to the sign of 
\begin{equation*}
P_{\ell }\left( \eta \right) =\ell ^{\prime }\left( t\right) \eta ^{2}+2\eta
+\ell ^{\prime }\left( t\right) .
\end{equation*}%
Due to (\ref{tlike}), the polynomial $P_{\ell }\left( \eta \right) $ has a
discriminant $\Delta =4(1-\left\vert \ell ^{\prime }\left( t\right)
\right\vert ^{2})>0$ and thus $P_{\ell }\left( \eta \right) $ has two real
roots 
\begin{equation}
\eta _{1}:=\left( -1+\sqrt{1-\left\vert \ell ^{\prime }\left( t\right)
\right\vert ^{2}}\right) /\ell ^{\prime }\left( t\right) \text{ \ and \ }%
\eta _{2}:=\left( -1-\sqrt{1-\left\vert \ell ^{\prime }\left( t\right)
\right\vert ^{2}}\right) /\ell ^{\prime }\left( t\right) .  \label{eta1,2}
\end{equation}

\begin{figure}[tbph]
\centering\includegraphics[width=0.45\textwidth]{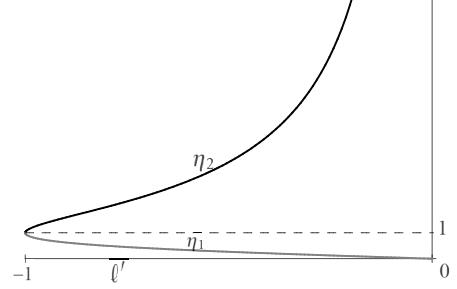}
\caption{Variation of $\protect\eta_1 $ and $\protect\eta_2$ in function of $%
\ell ^{\prime }$ when $-1<\ell ^{\prime } <0$.}
\label{fig4}
\end{figure}

Both $\eta _{1}$ and $\eta _{2}$ have the opposite sign of $\ell ^{\prime
}\left( t\right) ,$ for $t\in \left[ t_{1},t_{2}\right] $ and in particular%
\begin{equation*}
0<\eta _{1}<1<\eta _{2}.
\end{equation*}%
We deduce from (\ref{E'2}) that:

\begin{itemize}
\item If $\eta _{1}<\eta <\eta _{2},$ for $t\in \left[ t_{1},t_{2}\right] ,$
then $P_{\ell }\left( \eta \right) >0$ and by consequence $E_{\ell }^{\prime
}\left( t\right) <0.$\label{shrink+decay}

\item If $\eta =\eta _{1}$ or $\eta =\eta _{2},$ for $t\in \left[ t_{1},t_{2}%
\right] ,$ then $P_{\ell }\left( \eta \right) =E_{\ell }^{\prime }\left(
t\right) =0,$ i.e. the energy is constant.

\item If $\eta \in \left]0, \eta _{1}\right[ \cup \left] \eta _{2},\infty%
\right[ ,$ for $t\in \left[ t_{1},t_{2}\right] ,$ then $P_{\ell }\left( \eta
\right) <0$ and we have $E_{\ell }^{\prime }\left( t\right) >0.$
\end{itemize}

\item If the interval is independent of time, i.e. $\ell ^{\prime }\left(
t\right) =0\ $for $t\in \left[ t_{1},t_{2}\right] ,$ then $u_{x}\left( \ell
\left( t\right) ,t\right) =-\eta u_{t}\left( \ell \left( t\right) ,t\right) $
and thus%
\begin{equation*}
E_{\ell }^{\prime }\left( t\right) =-\eta u_{t}^{2}\left( \ell \left(
t\right) ,t\right) \leq 0,\ \ \text{for }t\in \left[ t_{1},t_{2}\right] .
\end{equation*}

\item If the interval is expanding, i.e. $0<\ell ^{\prime }\left( t\right)
<1\ $for $t\in \left[ t_{1},t_{2}\right] ,$ then $P_{\ell }\left( \eta
\right) >0$ for $\eta >0.$ Taking into account (\ref{tlike}) and (\ref{E'2}%
), we deduce that $E_{\ell }^{\prime }\left( t\right) \leq 0,$ \ for $t\in %
\left[ t_{1},t_{2}\right] .$
\end{itemize}

\begin{remark}
\label{rmkEd}To summarize, under the assumption $(\ref{tlike}),$%
\begin{equation}
\left\Vert \text{\ \ \ \ }%
\begin{array}{l}
\text{If the interval is expanding, then }E_{\ell }\left( t\right) \text{ is
nonincreasing for any }\eta >0\text{.} \\ 
\text{If the interval is shrinking, then }E_{\ell }\left( t\right) \text{ is
nonincreasing if the damping} \\ 
\text{factor can be taken close enough to the optimal value }\eta =1.%
\end{array}%
\right.  \label{E3}
\end{equation}
\end{remark}

Let us now estimate $E_{\ell }\left( t\right) $ is using $\varphi ^{\prime }$
and $\exp \left( -\ln \left\vert \gamma _{\eta }\right\vert \varphi \right) $%
.

\begin{theorem}
\label{th-stab1}Under the assumptions \emph{(\ref{tlike}), (\ref{mono})} and 
\emph{(\ref{ic}),} the solution of Problem \emph{(\ref{waveq})} satisfies%
\begin{multline}
\int_{0}^{\ell \left( t\right) }\left( \frac{e^{\ln \left\vert \gamma _{\eta
}\right\vert \varphi (t+x)}}{\varphi ^{\prime }(t+x)}+\frac{e^{\ln
\left\vert \gamma _{\eta }\right\vert \varphi (t-x)}}{\varphi ^{\prime }(t-x)%
}\right) \left( u_{x}^{2}+u_{t}^{2}\right) \\
+2\left( \frac{e^{\ln \left\vert \gamma _{\eta }\right\vert \varphi (t+x)}}{%
\varphi ^{\prime }(t+x)}-\frac{e^{\ln \left\vert \gamma _{\eta }\right\vert
\varphi (t-x)}}{\varphi ^{\prime }(t-x)}\right) u_{x}u_{t}dx=4\mathcal{S}%
_{\eta },  \label{E.D}
\end{multline}%
\ \ for$\ t\geq 0$, where $\mathcal{S}_{\eta }:=2\sum_{n\in 
\mathbb{Z}
}\left\vert \omega _{n}c_{n}\right\vert ^{2}$. Moreover, it holds that%
\begin{equation}
\mathcal{S}_{\eta }\tilde{m}\left( t\right) \leq E_{\ell }\left( t\right)
\leq \mathcal{S}_{\eta }\tilde{M}\left( t\right) ,\text{ \ \ for}\ t\geq 0,
\label{stab}
\end{equation}%
where 
\begin{eqnarray*}
\tilde{m}\left( t\right) &:&=\min_{x\in \left[ 0,\ell \left( t\right) \right]
}\left\{ \varphi ^{\prime }(t-x)e^{-\ln \left\vert \gamma _{\eta
}\right\vert \varphi (t-x)},\varphi ^{\prime }(t+x)e^{-\ln \left\vert \gamma
_{\eta }\right\vert \varphi (t+x)}\right\} , \\
\tilde{M}\left( t\right) &:&=\max_{x\in \left[ 0,\ell \left( t\right) \right]
}\left\{ \varphi ^{\prime }(t-x)e^{-\ln \left\vert \gamma _{\eta
}\right\vert \varphi (t-x)},\varphi ^{\prime }(t+x)e^{-\ln \left\vert \gamma
_{\eta }\right\vert \varphi (t+x)}\right\} .
\end{eqnarray*}
\end{theorem}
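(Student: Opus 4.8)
The plan is to mirror the proof of Theorem~\ref{th0damp}, but now carrying the exponential weight $e^{\ln|\gamma_\eta|\varphi}$ that already appears in the Parseval identity \eqref{ncn+}. First I would start from \eqref{ncn+}, which reads
\begin{equation*}
\sum_{n\in\mathbb{Z}}|\omega_n c_n|^2=\frac{1}{8}\int_{-\ell(t)}^{\ell(t)}\left(\tilde u_x+\tilde u_t\right)^2 e^{\ln|\gamma_\eta|\varphi(t+x)}\frac{dx}{\varphi'(t+x)},
\end{equation*}
valid for all $t\ge 0$ by the same argument (choosing the integration variable at time $t$ rather than $0$). Multiplying by $16$ and recalling $\mathcal{S}_\eta=2\sum|\omega_n c_n|^2$ gives $\int_{-\ell(t)}^{\ell(t)}(\tilde u_x+\tilde u_t)^2 e^{\ln|\gamma_\eta|\varphi(t+x)}\,dx/\varphi'(t+x)=8\mathcal{S}_\eta$. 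Then, exactly as in \eqref{eq-per-}, I use that $\tilde u_x$ is even and $\tilde u_t$ odd in $x$, so replacing $x$ by $-x$ yields $\int_{-\ell(t)}^{\ell(t)}(\tilde u_x-\tilde u_t)^2 e^{\ln|\gamma_\eta|\varphi(t-x)}\,dx/\varphi'(t-x)=8\mathcal{S}_\eta$. Adding the two identities, expanding $(\tilde u_x\pm\tilde u_t)^2=\tilde u_x^2+\tilde u_t^2\pm 2\tilde u_x\tilde u_t$, collecting terms, and using that the resulting integrand is even in $x$ to halve the domain to $[0,\ell(t)]$, I obtain \eqref{E.D} with the two coefficient functions $a(t,x):=e^{\ln|\gamma_\eta|\varphi(t+x)}/\varphi'(t+x)$ and $b(t,x):=e^{\ln|\gamma_\eta|\varphi(t-x)}/\varphi'(t-x)$.

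For the estimate \eqref{stab} I would argue pointwise in $x$ on the quadratic form $a(u_x^2+u_t^2)+2(a-b)u_xu_t$. Using the algebraic inequality $\pm 2u_xu_t\le u_x^2+u_t^2$ together with the identities \eqref{max}, namely $(a+b)\mp|a-b|=2\min\{a,b\}$ and $2\max\{a,b\}$, the integrand in \eqref{E.D} is bounded below by $2\min\{a(t,x),b(t,x)\}(u_x^2+u_t^2)$ and above by $2\max\{a(t,x),b(t,x)\}(u_x^2+u_t^2)$. Integrating over $[0,\ell(t)]$ and then pulling the $\min$/$\max$ over $x$ outside the integral — replacing the pointwise $\min$ by $\min_{x\in[0,\ell(t)]}$ for the lower bound and similarly for the upper bound — gives
\begin{equation*}
\min_{x}\left\{a,b\right\}E_\ell(t)\le 2\mathcal{S}_\eta\le \max_{x}\left\{a,b\right\}E_\ell(t).
\end{equation*}
Finally I rewrite $\min_x\{e^{\ln|\gamma_\eta|\varphi(t+x)}/\varphi'(t+x),\,e^{\ln|\gamma_\eta|\varphi(t-x)}/\varphi'(t-x)\}$ as $1/\tilde M(t)$ and the corresponding $\max$ as $1/\tilde m(t)$, since $\min\{1/A,1/B\}=1/\max\{A,B\}$, with $A,B=\varphi'(t\pm x)e^{-\ln|\gamma_\eta|\varphi(t\pm x)}$; this is legitimate because $\varphi'>0$ by \eqref{mono}. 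Rearranging yields \eqref{stab}.

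The only place requiring a little care — the main obstacle, such as it is — is justifying the time-$t$ version of the Parseval identity \eqref{ncn+}: in Theorem~\ref{thexist1} the formula \eqref{cn=} for $c_n$ was derived at an arbitrary $t$ before specializing to $t=0$, so the corresponding Parseval identity also holds at arbitrary $t$ with $\{e^{ni\pi\varphi(t+x)}/\sqrt2\}$ an orthonormal basis of $L^2(-\ell(t),\ell(t),\varphi'(t+x)dx)$; one just invokes that basis property at time $t$, which is exactly what was already used in the existence proof. Everything else is the same even/odd symmetry trick and the same elementary inequalities as in Theorem~\ref{th0damp}, with $e^{\ln|\gamma_\eta|\varphi}$ carried along passively; the case $\eta=0$ of Theorem~\ref{th-stab1} reduces to Theorem~\ref{th0damp} since then $\ln|\gamma_\eta|=0$ and $\mathcal{S}_\eta=2\sum|\omega_nc_n|^2=\tfrac{\pi^2}{2}\sum|(2n+1)c_n|^2=\mathcal{S}_0$.
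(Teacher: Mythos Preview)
Your proposal is correct and follows essentially the same route as the paper: start from the time-$t$ Parseval identity \eqref{ncn+}, use the even/odd parity of $\tilde u_x,\tilde u_t$ to get the companion identity with $t-x$, add, expand, halve by evenness to reach \eqref{E.D}, and then apply $\pm 2u_xu_t\le u_x^2+u_t^2$ together with \eqref{max} to get \eqref{stab}. Two small slips to fix: each single integral over $(-\ell(t),\ell(t))$ equals $4\mathcal{S}_\eta$, not $8\mathcal{S}_\eta$ (so the sum is $8\mathcal{S}_\eta$ and halving gives the stated $4\mathcal{S}_\eta$), and the quadratic form you bound pointwise should read $(a+b)(u_x^2+u_t^2)+2(a-b)u_xu_t$, not $a(u_x^2+u_t^2)+\dots$.
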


\begin{proof}
We argue as in the proof of Theorem \ref{th0damp}. Noting that $\tilde{u}%
_{x} $ is an even function of $x$ and that $\tilde{u}_{t}$ is an odd one,
then the identity (\ref{ncn+}) yields%
\begin{equation}
\int_{-\ell \left( t\right) }^{\ell \left( t\right) }e^{\ln \left\vert
\gamma _{\eta }\right\vert \varphi (t\pm x)}\left( \tilde{u}_{x}\pm \tilde{u}%
_{t}\right) ^{2}\frac{dx}{\varphi ^{\prime }(t\pm x)}=4\mathcal{S}_{\eta },%
\text{ \ \ for}\ t\geq 0.  \label{32}
\end{equation}%
Hence, summing, we get 
\begin{equation*}
\int_{-\ell \left( t\right) }^{\ell \left( t\right) }e^{\ln \left\vert
\gamma _{\eta }\right\vert \varphi (t+x)}\left( \tilde{u}_{x}+\tilde{u}%
_{t}\right) ^{2}\frac{dx}{\varphi ^{\prime }(t+x)}+\int_{-\ell \left(
t\right) }^{\ell \left( t\right) }e^{\ln \left\vert \gamma _{\eta
}\right\vert \varphi (t-x)}\left( \tilde{u}_{x}-\tilde{u}_{t}\right) ^{2}%
\frac{dx}{\varphi ^{\prime }(t-x)}=8\mathcal{S}_{\eta }.
\end{equation*}%
Expanding squares, we get%
\begin{multline}
\int_{-\ell \left( t\right) }^{\ell \left( t\right) }\left( \frac{e^{\ln
\left\vert \gamma _{\eta }\right\vert \varphi (t+x)}}{\varphi ^{\prime }(t+x)%
}+\frac{e^{\ln \left\vert \gamma _{\eta }\right\vert \varphi (t-x)}}{\varphi
^{\prime }(t-x)}\right) \left( \tilde{u}_{x}^{2}+\tilde{u}_{t}^{2}\right)
\label{=E0} \\
+2\left( \frac{e^{\ln \left\vert \gamma _{\eta }\right\vert \varphi (t+x)}}{%
\varphi ^{\prime }(t+x)}-\frac{e^{\ln \left\vert \gamma _{\eta }\right\vert
\varphi (t-x)}}{\varphi ^{\prime }(t-x)}\right) \tilde{u}_{x}\tilde{u}_{t}%
\text{ }dx=8\mathcal{S}_{\eta },\text{ \ \ for}\ t\geq 0.
\end{multline}%
As the function under the integral sign is even, then (\ref{E.D}) follows.

For $0\leq x\leq \ell \left( t\right) $ and $t\geq 0,$ let us denote 
\begin{equation*}
A\left( x,t\right) =\frac{e^{\ln \left\vert \gamma _{\eta }\right\vert
\varphi (t-x)}}{\varphi ^{\prime }(t-x)}\text{ and \ }B\left( x,t\right) =%
\frac{e^{\ln \left\vert \gamma _{\eta }\right\vert \varphi (t+x)}}{\varphi
^{\prime }(t+x)}.
\end{equation*}%
Then, we can rewrite (\ref{E.D}) as 
\begin{equation*}
\int_{0}^{\ell \left( t\right) }\left( A\left( x,t\right) +B\left(
x,t\right) \right) \left( \tilde{u}_{t}^{2}+\tilde{u}_{x}^{2}\right)
dx+2\left( A\left( x,t\right) -B\left( x,t\right) \right) \tilde{u}_{t}%
\tilde{u}_{x}dx=4\mathcal{S}_{\eta }.
\end{equation*}%
Using the algebraic inequality%
\begin{equation*}
-\left\vert A-B\right\vert \left( u_{t}^{2}+u_{x}^{2}\right) \leq 2\left(
A-B\right) u_{t}u_{x}\leq \left\vert A-B\right\vert \left(
u_{t}^{2}+u_{x}^{2}\right) ,
\end{equation*}%
we get%
\begin{equation*}
\int_{0}^{\ell \left( t\right) }\left( \left( A+B\right) -\left\vert
A-B\right\vert \right) \left( \tilde{u}_{t}^{2}+\tilde{u}_{x}^{2}\right)
dx\leq 4\mathcal{S}_{\eta }\leq \int_{0}^{\ell \left( t\right) }\left(
\left( A+B\right) +\left\vert A-B\right\vert \right) \left( \tilde{u}%
_{t}^{2}+\tilde{u}_{x}^{2}\right) dx.
\end{equation*}%
Thanks to (\ref{max}), the precedent estimation yields%
\begin{multline*}
\int_{0}^{\ell \left( t\right) }\min \left\{ \frac{e^{\ln \left\vert \gamma
_{\eta }\right\vert \varphi (t-x)}}{\varphi ^{\prime }(t-x)},\frac{e^{\ln
\left\vert \gamma _{\eta }\right\vert \varphi (t+x)}}{\varphi ^{\prime }(t+x)%
}\right\} \left( \tilde{u}_{t}^{2}+\tilde{u}_{x}^{2}\right) dx\leq 2\mathcal{%
S}_{\eta } \\
\leq \int_{0}^{\ell \left( t\right) }\max \left\{ \frac{e^{\ln \left\vert
\gamma _{\eta }\right\vert \varphi (t-x)}}{\varphi ^{\prime }(t-x)},\frac{%
e^{\ln \left\vert \gamma _{\eta }\right\vert \varphi (t+x)}}{\varphi
^{\prime }(t+x)}\right\} \left( \tilde{u}_{t}^{2}+\tilde{u}_{x}^{2}\right) dx%
\text{,}
\end{multline*}%
\ for $t\geq 0.$ By consequence 
\begin{multline*}
\min_{x\in \left[ 0,\ell \left( t\right) \right] }\left\{ \frac{e^{\ln
\left\vert \gamma _{\eta }\right\vert \varphi (t-x)}}{\varphi ^{\prime }(t-x)%
},\frac{e^{\ln \left\vert \gamma _{\eta }\right\vert \varphi (t+x)}}{\varphi
^{\prime }(t+x)}\right\} E_{\ell }\left( t\right) \leq \mathcal{S}_{\eta } \\
\leq \max_{x\in \left[ 0,\ell \left( t\right) \right] }\left\{ \frac{e^{\ln
\left\vert \gamma _{\eta }\right\vert \varphi (t-x)}}{\varphi ^{\prime }(t-x)%
},\frac{e^{\ln \left\vert \gamma _{\eta }\right\vert \varphi (t+x)}}{\varphi
^{\prime }(t+x)}\right\} E_{\ell }\left( t\right) .
\end{multline*}

This implies (\ref{stab}).
\end{proof}

Since $\ln \left\vert \gamma _{\eta }\right\vert \geq 0 $ for $\eta \geq 0,$
and $\varphi $ is nondecreasing, we have the following immediate corollary.

\begin{corollary}
Under the assumption of Theorem \ref{th-stab1}, it holds that%
\begin{equation}
\mathcal{S}_{\eta }m\left( t\right) e^{-\ln \left\vert \gamma _{\eta
}\right\vert \varphi (t+\ell \left( t\right) )}\leq E_{\ell }\left( t\right)
\leq \mathcal{S}_{\eta }M\left( t\right) e^{-\ln \left\vert \gamma _{\eta
}\right\vert \varphi (t-\ell \left( t\right) )},\text{ \ \ for}\ t\geq 0,
\label{stab0}
\end{equation}%
$m\left( t\right) $ and $M\left( t\right) $ are given by \emph{(\ref{m.M})}.
\end{corollary}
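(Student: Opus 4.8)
The plan is to deduce (\ref{stab0}) directly from the two-sided estimate (\ref{stab}) of Theorem \ref{th-stab1} by replacing the sharp weights $\tilde{m}(t)$ and $\tilde{M}(t)$ with the cruder, more transparent quantities that appear in (\ref{stab0}). The whole point is to decouple, inside the $\min$ and $\max$ defining $\tilde{m}(t)$ and $\tilde{M}(t)$, the factor $\varphi ^{\prime }(t\pm x)$ from the factor $e^{-\ln \left\vert \gamma _{\eta }\right\vert \varphi (t\pm x)}$, and to bound each factor separately by a monotonicity argument.

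First I would fix $t\geq 0$ and let $x$ range over $[0,\ell (t)]$, so that $t-\ell (t)\leq t-x\leq t+x\leq t+\ell (t)$. Since $\varphi $ is increasing by (\ref{mono}), this gives $\varphi (t-\ell (t))\leq \varphi (t\pm x)\leq \varphi (t+\ell (t))$. Because $\left\vert \gamma _{\eta }\right\vert \geq 1$ for every $\eta \geq 0$, we have $\ln \left\vert \gamma _{\eta }\right\vert \geq 0$, hence $s\mapsto e^{-\ln \left\vert \gamma _{\eta }\right\vert s}$ is nonincreasing and
\[
e^{-\ln \left\vert \gamma _{\eta }\right\vert \varphi (t+\ell (t))}\leq e^{-\ln \left\vert \gamma _{\eta }\right\vert \varphi (t\pm x)}\leq e^{-\ln \left\vert \gamma _{\eta }\right\vert \varphi (t-\ell (t))},\qquad x\in [0,\ell (t)].
\]
On the other hand, the very definition of $m(t)$ and $M(t)$ in (\ref{m.M}) gives $m(t)\leq \varphi ^{\prime }(t\pm x)\leq M(t)$ on the same range of $x$.

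Multiplying these nonnegative bounds, I get that each of the two expressions $\varphi ^{\prime }(t\pm x)e^{-\ln \left\vert \gamma _{\eta }\right\vert \varphi (t\pm x)}$ occurring in $\tilde{m}(t)$ is at least $m(t)e^{-\ln \left\vert \gamma _{\eta }\right\vert \varphi (t+\ell (t))}$, so taking the minimum over $x$ yields $\tilde{m}(t)\geq m(t)e^{-\ln \left\vert \gamma _{\eta }\right\vert \varphi (t+\ell (t))}$; symmetrically, each expression occurring in $\tilde{M}(t)$ is at most $M(t)e^{-\ln \left\vert \gamma _{\eta }\right\vert \varphi (t-\ell (t))}$, whence $\tilde{M}(t)\leq M(t)e^{-\ln \left\vert \gamma _{\eta }\right\vert \varphi (t-\ell (t))}$. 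Substituting these two bounds into (\ref{stab}) gives exactly (\ref{stab0}).

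Since every step is an elementary monotonicity estimate, there is no real obstacle here; the only point demanding a little care is bookkeeping the directions of the inequalities — using the largest argument $t+\ell (t)$ of $\varphi $ to produce the smallest exponential in the lower bound, and the smallest argument $t-\ell (t)$ in the upper bound — which hinges on the sign $\ln \left\vert \gamma _{\eta }\right\vert \geq 0$, itself guaranteed by the standing hypothesis $\eta \geq 0$.
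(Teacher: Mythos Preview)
Your argument is correct and is exactly the intended one: the paper presents this as an immediate consequence of (\ref{stab}) together with the facts that $\varphi$ is nondecreasing and $\ln\left\vert\gamma_{\eta}\right\vert\geq 0$, and you have simply spelled out the elementary monotonicity bounds that yield $\tilde m(t)\geq m(t)e^{-\ln\left\vert\gamma_{\eta}\right\vert\varphi(t+\ell(t))}$ and $\tilde M(t)\leq M(t)e^{-\ln\left\vert\gamma_{\eta}\right\vert\varphi(t-\ell(t))}$.
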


If $\varphi ^{\prime }$ is monotone, then (\ref{stab0}) can be replaced by
more explicit estimation.

\begin{corollary}
Under the assumption of Theorem \ref{th-stab1}, assume that $\varphi $
satisfies (\ref{monot}) on $[t_{0},t_{1}]$, then:
\end{corollary}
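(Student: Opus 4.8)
The plan is to read the corollary directly off the estimate~(\ref{stab0}): under the monotonicity hypothesis~(\ref{monot}) one can decide which of the two endpoints $t-\ell(t)$ and $t+\ell(t)$ realizes $m(t)$ and which realizes $M(t)$ from~(\ref{m.M}), and inserting those explicit values into~(\ref{stab0}) produces the announced bounds. As in Corollary~\ref{Cor1}, the argument splits according to the sign of $\ell'$ on $[t_0,t_1]$.

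First I would differentiate Moor's equation~(\ref{Moor}) to recover~(\ref{+t-t}), i.e. $\varphi'(t-\ell(t))=\frac{1+\ell'(t)}{1-\ell'(t)}\varphi'(t+\ell(t))$, and note that $s\mapsto(1+s)/(1-s)$ is $\le 1$ on $(-1,0]$ and $\ge 1$ on $[0,1)$, which is~(\ref{ll--})--(\ref{ll++}). Combined with~(\ref{monot}), this forces $\varphi'$ to be nondecreasing on $[t_0,t_1]$ when $-1<\ell'\le 0$ and nonincreasing when $0\le\ell'<1$; hence $m(t)=\varphi'(t-\ell(t))$, $M(t)=\varphi'(t+\ell(t))$ in the shrinking case, while $m(t)=\varphi'(t+\ell(t))$, $M(t)=\varphi'(t-\ell(t))$ in the expanding case. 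Substituting into~(\ref{stab0}) then yields, for $t\in[t_0,t_1]$,
\begin{equation*}
\mathcal{S}_{\eta}\,\varphi'(t-\ell(t))\,e^{-\ln|\gamma_{\eta}|\varphi(t+\ell(t))}\le E_{\ell}(t)\le\mathcal{S}_{\eta}\,\varphi'(t+\ell(t))\,e^{-\ln|\gamma_{\eta}|\varphi(t-\ell(t))}\qquad(-1<\ell'\le 0),
\end{equation*}
\begin{equation*}
\mathcal{S}_{\eta}\,\varphi'(t+\ell(t))\,e^{-\ln|\gamma_{\eta}|\varphi(t+\ell(t))}\le E_{\ell}(t)\le\mathcal{S}_{\eta}\,\varphi'(t-\ell(t))\,e^{-\ln|\gamma_{\eta}|\varphi(t-\ell(t))}\qquad(0\le\ell'<1),
\end{equation*}
which is the claim; the stated monotonicity of $\varphi'$ is what was just proved, and the monotonicity of $E_{\ell}$ follows from~(\ref{E3}) ($E_{\ell}$ is nonincreasing in the expanding case for every $\eta>0$, and in the shrinking case whenever $\eta$ lies in the stability window $(\eta_1,\eta_2)$).

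In the expanding case this bound is in fact exactly $\mathcal{S}_{\eta}\tilde m(t)\le E_{\ell}(t)\le\mathcal{S}_{\eta}\tilde M(t)$ from Theorem~\ref{th-stab1}, because there $\varphi'$ and $s\mapsto e^{-\ln|\gamma_{\eta}|\varphi(s)}$ are both nonincreasing, so their product is nonincreasing on $[t-\ell(t),t+\ell(t)]$ and attains $\tilde m(t)$ at $t+\ell(t)$ and $\tilde M(t)$ at $t-\ell(t)$; nothing is lost. The one genuinely delicate point — a limitation rather than an obstacle — is the shrinking case: then $\varphi'$ increases while $s\mapsto e^{-\ln|\gamma_{\eta}|\varphi(s)}$ decreases, so the product $\varphi'(s)e^{-\ln|\gamma_{\eta}|\varphi(s)}$ need not be monotone and its extrema over $[t-\ell(t),t+\ell(t)]$ need not be attained at the endpoints. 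This is exactly why the shrinking-case estimate must pair $\varphi'$ at one endpoint with the exponential at the other, an estimate still valid because $\varphi'$ is monotone and because $\ln|\gamma_{\eta}|\ge 0$ with $\varphi$ increasing makes $s\mapsto e^{-\ln|\gamma_{\eta}|\varphi(s)}$ nonincreasing. Should the corollary also compare $E_{\ell}(t)$ with $E_{\ell}(t_0)$ as in the undamped case, one writes~(\ref{stab}) at $t$ and at $t_0$ and eliminates $\mathcal{S}_{\eta}$, recovering the estimate~(\ref{exp-decay}).
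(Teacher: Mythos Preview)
Your proposal is correct and follows exactly the route the paper intends: the paper's proof is the single line ``It suffices to argue as in the proof of Corollary~\ref{Cor1},'' and you carry that out in full---differentiate~(\ref{Moor}) to obtain~(\ref{+t-t}), use the sign of $\ell'$ together with~(\ref{monot}) to pin down $m(t)$ and $M(t)$, and substitute into~(\ref{stab0}). Your additional remarks on when the bound coincides with $\tilde m(t),\tilde M(t)$ and on the competing monotonicities in the shrinking case are accurate and go beyond what the paper states, but the core argument is the same.
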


\begin{itemize}
\item If $-1<\ell ^{\prime }\left( t\right) \leq 0$ on $[t_{0},t_{1}]$, then 
$\varphi ^{\prime }$ is nondecreasing and $E_{\ell }\left( t\right) $
satisfies 
\begin{equation}
\mathcal{S}_{\eta }\varphi ^{\prime }(t-\ell \left( t\right) )e^{-\ln
\left\vert \gamma _{\eta }\right\vert \varphi (t+\ell \left( t\right) )}\leq
E_{\ell }\left( t\right) \leq \mathcal{S}_{\eta }\varphi ^{\prime }\left(
t+\ell \left( t\right) \right) e^{-\ln \left\vert \gamma _{\eta }\right\vert
\varphi (t-\ell \left( t\right) )},\text{ \ \ for }t\in \lbrack t_{0},t_{1}].
\label{stab1.}
\end{equation}

\item If $0<\ell ^{\prime }\left( t\right) <1$ on $[t_{0},t_{1}]$, then $%
\varphi ^{\prime }$and $E_{\ell }\left( t\right) $ are nonincreasing and%
\begin{equation}
\mathcal{S}_{\eta }\varphi ^{\prime }(t+\ell \left( t\right) )e^{-\ln
\left\vert \gamma _{\eta }\right\vert \varphi (t+\ell \left( t\right) )}\leq
E_{\ell }\left( t\right) \leq \mathcal{S}_{\eta }\varphi ^{\prime }\left(
t-\ell \left( t\right) \right) e^{-\ln \left\vert \gamma _{\eta }\right\vert
\varphi (t-\ell \left( t\right) )},\text{ \ \ for }t\in \lbrack t_{0},t_{1}].
\label{stab2.}
\end{equation}
\end{itemize}

\begin{proof}
It suffices to argue as in the proof of Corollary \ref{Cor1}.
\end{proof}

\begin{remark}
If the interval is shrinking, there is competition between the nondecreasing 
$\varphi ^{\prime }$ and $e^{-\ln \left\vert \gamma _{\eta }\right\vert
\varphi }$ in estimation (\ref{stab1.}). The behaviour of $E_{\ell }\left(
t\right) $ depends on the value of the damping $\eta $ as stated in Remark %
\ref{rmkEd}.
\end{remark}

To compare $E_{\ell }\left( t\right) $ with the energy $E_{\ell }\left(
t_{0}\right) $ for $0\leq t_{0}<t,$ we have the following result.

\begin{corollary}
\label{CoroEt0}Under the assumption of Theorem \ref{th-stab1}, we have 
\begin{multline}
\frac{m\left( t\right) e^{-\ln \left\vert \gamma _{\eta }\right\vert \varphi
(t+\ell \left( t\right) )}}{M\left( t_{0}\right) e^{-\ln \left\vert \gamma
_{\eta }\right\vert \varphi (t_{0}-\ell \left( t_{0}\right) )}}E_{\ell
}\left( t_{0}\right) \leq E_{\ell }\left( t\right)  \label{dEt0} \\
\leq \frac{M\left( t\right) e^{-\ln \left\vert \gamma _{\eta }\right\vert
\varphi (t-\ell \left( t\right) )}}{m\left( t_{0}\right) e^{-\ln \left\vert
\gamma _{\eta }\right\vert \varphi (t_{0}+\ell \left( t_{0}\right) )}}%
E_{\ell }\left( t_{0}\right) ,\text{ for }0\leq t_{0}<t.
\end{multline}%
Moreover, if $\varphi ^{\prime }$ satisfies (\ref{monot}) on $[t_{0},t_{1}]$%
, then:
\end{corollary}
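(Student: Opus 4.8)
The plan is to obtain \eqref{dEt0} directly from Theorem \ref{th-stab1}, more precisely from its corollary \eqref{stab0}, by writing that estimate at the two times $t$ and $t_{0}$ and exploiting the fact that $\mathcal{S}_{\eta}$ does not depend on time. Since \eqref{stab0} holds for every $t\geq 0$, it holds in particular at $t_{0}$, giving $\mathcal{S}_{\eta}m(t_{0})e^{-\ln|\gamma_{\eta}|\varphi(t_{0}+\ell(t_{0}))}\leq E_{\ell}(t_{0})\leq \mathcal{S}_{\eta}M(t_{0})e^{-\ln|\gamma_{\eta}|\varphi(t_{0}-\ell(t_{0}))}$. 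The right-hand inequality provides a lower bound for $\mathcal{S}_{\eta}$ in terms of $E_{\ell}(t_{0})$ and the left-hand one an upper bound; substituting these, respectively, into the lower and the upper bound of \eqref{stab0} written at time $t$ yields precisely the two inequalities in \eqref{dEt0}. Equivalently, one may start from \eqref{stab} with $\tilde m,\tilde M$ and then use $\tilde m(t)\geq m(t)e^{-\ln|\gamma_{\eta}|\varphi(t+\ell(t))}$ and $\tilde M(t)\leq M(t)e^{-\ln|\gamma_{\eta}|\varphi(t-\ell(t))}$, which hold because $\ln|\gamma_{\eta}|\geq 0$ and $\varphi$ is nondecreasing.

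For the ``Moreover'' part, the plan is to argue exactly as in the proof of Corollary \ref{Cor1}. Under \eqref{monot} one first records, via \eqref{ll--}--\eqref{ll++}, that $\varphi^{\prime}$ is necessarily nondecreasing on $[t_{0},t_{1}]$ when $-1<\ell^{\prime}\leq 0$ there (with $m(t)=\varphi^{\prime}(t-\ell(t))$, $M(t)=\varphi^{\prime}(t+\ell(t))$) and nonincreasing when $0<\ell^{\prime}<1$ (with these roles interchanged). Then, on the interval $[\,t-\ell(t),\,t+\ell(t)\,]$, the composite weight $s\mapsto\varphi^{\prime}(s)\,e^{-\ln|\gamma_{\eta}|\varphi(s)}$ is the product of a monotone function and the nonincreasing function $e^{-\ln|\gamma_{\eta}|\varphi}$; in each case one bounds this product below and above by evaluating the two factors separately at the endpoints $t\pm\ell(t)$ — e.g.\ in the shrinking case $\varphi^{\prime}(s)e^{-\ln|\gamma_{\eta}|\varphi(s)}\geq\varphi^{\prime}(t-\ell(t))e^{-\ln|\gamma_{\eta}|\varphi(t+\ell(t))}$ and $\leq\varphi^{\prime}(t+\ell(t))e^{-\ln|\gamma_{\eta}|\varphi(t-\ell(t))}$, and symmetrically for the expanding case. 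Feeding these endpoint bounds for $\tilde m(t),\tilde M(t)$ and for $\tilde m(t_{0}),\tilde M(t_{0})$ into the chaining argument of the first paragraph produces the announced refined two-sided estimates of $E_{\ell}(t)$ in terms of $E_{\ell}(t_{0})$ on $[t_{0},t_{1}]$, split according to the sign of $\ell^{\prime}$.

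I do not expect a genuine obstacle: the argument is a routine elimination of the time-independent constant $\mathcal{S}_{\eta}$, together with the monotonicity bookkeeping already carried out in Section~3. The only point that needs a little care is that the composite weight $\varphi^{\prime}e^{-\ln|\gamma_{\eta}|\varphi}$ need not itself be monotone even when $\varphi^{\prime}$ is, so one must keep the increasing factor evaluated at $t-\ell(t)$ and the decaying exponential at $t+\ell(t)$ for the lower bound (and vice versa for the upper bound), and correctly track which of $t\pm\ell(t)$ and $t_{0}\pm\ell(t_{0})$ ends up in the numerator and which in the denominator of the final fractions.
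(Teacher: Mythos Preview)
Your proposal is correct and follows essentially the same approach as the paper: the paper's proof simply writes \eqref{stab0} (equivalently, \eqref{stab}) at both times $t$ and $t_{0}$ to bracket the time-independent constant $\mathcal{S}_{\eta}$, and then eliminates it by combining the resulting inequalities, exactly as you describe. For the ``Moreover'' part the paper likewise defers to the monotonicity bookkeeping of Corollary~\ref{Cor1}, so your plan matches the paper's argument.
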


\begin{itemize}
\item If $-1<\ell ^{\prime }\left( t\right) \leq 0$ on $[t_{0},t_{1}]$, then 
$\varphi ^{\prime }$ is nondecreasing and $E_{\ell }\left( t\right) $
satisfies%
\begin{multline}
\frac{\varphi ^{\prime }\left( t-\ell \left( t\right) \right) e^{-\ln
\left\vert \gamma _{\eta }\right\vert \varphi (t+\ell \left( t\right) )}}{%
\varphi ^{\prime }\left( t_{0}+\ell \left( t_{0}\right) \right) e^{-\ln
\left\vert \gamma _{\eta }\right\vert \varphi (t_{0}-\ell \left(
t_{0}\right) )}}E_{\ell }\left( t_{0}\right) \leq E_{\ell }\left( t\right)
\label{stab3} \\
\leq \frac{\varphi ^{\prime }\left( t+\ell \left( t\right) \right) e^{-\ln
\left\vert \gamma _{\eta }\right\vert \varphi (t-\ell \left( t\right) )}}{%
\varphi ^{\prime }\left( t_{0}-\ell \left( t_{0}\right) \right) e^{-\ln
\left\vert \gamma _{\eta }\right\vert \varphi (t_{0}+\ell \left(
t_{0}\right) )}}E_{\ell }\left( t_{0}\right) ,\text{ \ \ for }t\in \lbrack
t_{0},t_{1}].
\end{multline}

\item If $0<\ell ^{\prime }\left( t\right) <1$ on $[t_{0},t_{1}]$, then $%
\varphi ^{\prime }$and $E_{\ell }\left( t\right) $ are nonincreasing and%
\begin{multline}
\frac{\varphi ^{\prime }\left( t+\ell \left( t\right) \right) e^{-\ln
\left\vert \gamma _{\eta }\right\vert \varphi (t+\ell \left( t\right) )}}{%
\varphi ^{\prime }\left( t_{0}-\ell \left( t_{0}\right) \right) e^{-\ln
\left\vert \gamma _{\eta }\right\vert \varphi (t_{0}-\ell \left(
t_{0}\right) )}}E_{\ell }\left( t_{0}\right) \leq E_{\ell }\left( t\right)
\label{stab4} \\
\leq \frac{\varphi ^{\prime }\left( t-\ell \left( t\right) \right) e^{-\ln
\left\vert \gamma _{\eta }\right\vert \varphi (t-\ell \left( t\right) )}}{%
\varphi ^{\prime }\left( t_{0}+\ell \left( t_{0}\right) \right) e^{-\ln
\left\vert \gamma _{\eta }\right\vert \varphi (t_{0}+\ell \left(
t_{0}\right) )}}E_{\ell }\left( t_{0}\right) ,\text{ \ \ for }t\in \lbrack
t_{0},t_{1}].
\end{multline}
\end{itemize}

\begin{proof}
Since (\ref{stab}) holds also for $t=t_{0}$, then the corollary follows by
combining the inequalities%
\begin{equation*}
\frac{e^{\ln \left\vert \gamma _{\eta }\right\vert \varphi (t-\ell \left(
t\right) )}}{M\left( t\right) }E_{\ell }\left( t\right) \leq \mathcal{S}%
_{\eta }\leq \frac{e^{\ln \left\vert \gamma _{\eta }\right\vert \varphi
(t_{0}+\ell \left( t_{0}\right) )}}{m\left( t_{0}\right) }E_{\ell }\left(
t_{0}\right)
\end{equation*}%
and%
\begin{equation*}
\frac{e^{\ln \left\vert \gamma _{\eta }\right\vert \varphi (t_{0}-\ell
\left( t_{0}\right) )}}{M\left( t_{0}\right) }E_{\ell }\left( t_{0}\right)
\leq \mathcal{S}_{\eta }\leq \frac{e^{\ln \left\vert \gamma _{\eta
}\right\vert \varphi (t+\ell \left( t\right) )}}{m\left( t\right) }E_{\ell
}\left( t\right) .
\end{equation*}
\end{proof}

\section*{Acknowledgements} The authors have been supported by the General
Direction of Scientific Research and Technological Development (Algerian
Ministry of Higher Education and Scientific Research) PRFU \#
C00L03UN280120220010.

\subsection*{ORCID} Abdelmouhcene Sengouga https://orcid.org/0000-0003-3183-7973.

\end{document}